\newtheorem{theorem}{Theorem}[section]
\newtheorem{lemma}[theorem]{Lemma}
\theoremstyle{definition}
\newtheorem{definition}[theorem]{Definition}
\newtheorem{proposition}[theorem]{Proposition}
\newtheorem{remark}[theorem]{Remark}
\numberwithin{equation}{section}
\begin{document}
\title{On the unramified extension of an arithmetic function field in several variables}
\author{Feng-Wen An}
\address{School of Mathematics and Statistics, Wuhan University, Wuhan,
Hubei 430072, People's Republic of China}
\email{fwan@amss.ac.cn}
\subjclass[2010]{Primary 11G35; Secondary 14F35, 14G25}
\keywords{arithmetic function field, arithmetically unramified, \'{e}tale
fundamental group}

\begin{abstract}
In this paper we will give a scheme-theoretic discussion on the unramified extensions of an arithmetic function field in several variables. The notion of unramified discussed here is parallel to that in algebraic number theory and for the case of classical varieties, coincides with that in Lang's theory of unramified class fields of a function field in several variables. It is twofold for us to introduce the notion of unramified. One is for the computation of the \'{e}tale fundamental group of an arithmetic scheme; the other is for  an ideal-theoretic theory of unramified class fields over an arithmetic function field in several variables. Fortunately, in the paper we will also have operations on unramified extensions such as base changes, composites, subfields, transitivity, etc. It will be proved that a purely transcendental extension over the rational field has a trivial unramified extension. As an application, it will be seen that the affine scheme of a ring  over the ring of integers in several variables has a trivial \'{e}tale fundamental group.
\end{abstract}

\maketitle

\begin{center}
{\tiny {Contents} }
\end{center}

{\tiny \qquad {1. Introduction} }

{\tiny \qquad {2. Basic Definitions} }

{\tiny \qquad {3. Operations on Unramified Extensions}}

{\tiny \qquad {4. Unramified Extensions of a Purely Transcendental Extension}}

{\tiny \qquad {5. An Application to the \'{E}tale Fundamental Group}}

{\tiny \qquad {References}}{}

\section{Introduction}

In this paper we will discuss the unramified extensions of an arithmetic function field, i.e., a function field over $\mathbb{Z}$ in several variables. When restricted to classical varieties, the notion of unramified discussed here coincides with that in Lang's theory of unramified class fields of a function field in several variables (see \cite{lang}).

In deed, it is twofold for us to introduce the notion of unramified extensions of an arithmetic function field in a scheme-theoretic manner. One is for the computation of the \'{e}tale fundamental group of an arithmetic scheme (see \S 5 in the paper or see \cite{an2,an4} for a general case). The other is for one to try to have an ideal-theoretic theory of unramified class fields over an arithmetic function field in several variables (for instance, see \cite{an5}).

The notion of unramified for arithmetic function fields discussed in the paper is parallel to that in algebraic number theory (for instance, see \cite{neu}). In fact, let $K$ be an arithmetic function field, i.e., $K$ is the fractional field of a ring of the form $$
\mathbb{Z}[t_{1},t_{2},\cdots, t_{n},s_{1},s_{2},\cdots, s_{m}]
$$
where
$
t_{1},t_{2},\cdots, t_{n}
$
are algebraic independent variables over $\mathbb{Q}$ and
$
s_{1},s_{2},\cdots, s_{m}
$
are algebraic over the field $\mathbb{Q}[t_{1},t_{2},\cdots, t_{n}]$. As a counterpart in algebraic number theory, the ring of algebraic integers of the field $K$, denoted by $\mathcal{O}_{K}$, is also defined to be the set of elements $x\in K$ that are integral over the ring $\mathbb{Z}[t_{1},t_{2},\cdots, t_{n}]$.

Let $L\supseteq K$ be an arithmetic function field also over variables $t_{1},t_{2},\cdots, t_{n}$. Then $L$ is unramified over $K$ if the scheme $Spec(\mathcal{O}_{L})$ is unramified over $Spec(\mathcal{O}_{K})$ by the morphism induced from the inclusion map $K\hookrightarrow L$. It will be seen that the unramified is independent of the choice of variables $t_{1},t_{2},\cdots, t_{n}$. See \S 2 for details.

Fortunately, there also exist the same operations on unramified extensions of arithmetic function fields as  in algebraic number theory (see \cite{neu}), such as base changes, composites, subfields, transitivity, etc. See \S 3 for details.

In \S 4 we will prove that a purely transcendental extension $\mathbb{Q}(t_{1},t_{2},\cdots, t_{n})$ has a trivial unramified extension. As an immediate application, in \S 5 it will be seen that $Spec(\mathbb{Z}[t_{1},t_{2},\cdots, t_{n}])$ has a trivial \'{e}tale fundamental group, i.e., $$\pi_{1}^{et}(Spec(\mathbb{Z}[t_{1},t_{2},\cdots, t_{n}]))=\{0\}.$$

\subsection*{Acknowledgment}

\quad The author would like to express his sincere gratitude to Professor Li
Banghe for his advice and instructions on algebraic geometry and topology.

\section{Basic Definitions}

Let's recall basic definitions for arithmetically unramified extensions of a function field over $\mathbb{Z}$ in several variables that are introduced in
\cite{an4}, where the maximal arithmetically unramified extension will be used to give a computation of the \'{e}tale fundamental group of an arithmetic
scheme (see \S 5).

\subsection{Convention}

In the paper, an \textbf{arithmetic function field} \textbf{over variables} $%
t_{1},t_{2},\cdots, t_{n}$ is the fractional field $K=Fr(D)$ of an integral
domain
\begin{equation*}
D=\mathbb{Z}[t_{1},t_{2},\cdots, t_{n},s_{1},s_{2},\cdots, s_{m}]
\end{equation*}
of finite type over $\mathbb{Z}$, where
\begin{equation*}
s_{1},s_{2},\cdots, s_{m}
\end{equation*}
are algebraic over $\mathbb{Q}[t_{1},t_{2},\cdots, t_{n}]$ and
\begin{equation*}
t_{1},t_{2},\cdots, t_{n},
\end{equation*}
are (algebraic independent) variables over $\mathbb{Q}$.

\subsection{Ring of algebraic integers in an arithmetic function field}

Let $K$ be an arithmetic function field over variables $t_{1},t_{2},\cdots,
t_{n}$. As a counterpart in algebraic number theory, put
\begin{itemize}
\item $\mathcal{O}_{[t_{1},t_{2},\cdots, t_{n}]K}\triangleq$ the subring of
elements $x\in K$ that are integral over the integral domain $\mathbb{Z}%
[t_{1},t_{2},\cdots, t_{n}]$.
\end{itemize}

The ring $\mathcal{O}_{[t_{1},t_{2},\cdots, t_{n}]K}$ is said to be the
\textbf{ring of algebraic integers} of $K$ \textbf{over variables} $%
t_{1},t_{2},\cdots, t_{n}$. Sometimes, we will write $$\mathcal{O}_{K}=
\mathcal{O}_{[t_{1},t_{2},\cdots, t_{n}]K}$$ for brevity.

\begin{proposition}
We have $K=Fr(\mathcal{O}_{K})$.
\end{proposition}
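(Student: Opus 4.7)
The plan is to show that every element of $K$ can be written as a ratio of two elements of $\mathcal{O}_K$, which immediately gives $K \subseteq Fr(\mathcal{O}_K)$; the reverse inclusion is obvious since $\mathcal{O}_K \subseteq K$.

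First I would take an arbitrary $x \in K = Fr(D)$ and write $x = p/q$ with $p, q \in D$ and $q \neq 0$. The goal is to produce a single nonzero $c \in \mathbb{Z}[t_1,\ldots,t_n]$ with $cp, cq \in \mathcal{O}_K$; then $x = (cp)/(cq) \in Fr(\mathcal{O}_K)$.

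To build such a $c$, I would use the standard ``clearing the leading coefficient'' trick on each transcendental generator $s_i$. Since $s_i$ is algebraic over $\mathbb{Q}[t_1,\ldots,t_n]$, clearing denominators gives a nonzero relation
\begin{equation*}
a_{i,k_i} s_i^{k_i} + a_{i,k_i-1} s_i^{k_i-1} + \cdots + a_{i,0} = 0
\end{equation*}
with $a_{i,j} \in \mathbb{Z}[t_1,\ldots,t_n]$ and $a_{i,k_i} \neq 0$. Multiplying through by $a_{i,k_i}^{k_i-1}$ shows that $a_{i,k_i} s_i$ is a root of a monic polynomial over $\mathbb{Z}[t_1,\ldots,t_n]$, hence $a_{i,k_i} s_i \in \mathcal{O}_K$. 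Set $d_i = a_{i,k_i}$.

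Now any element of $D$ is a $\mathbb{Z}[t_1,\ldots,t_n]$-linear combination of monomials $s_1^{e_1}\cdots s_m^{e_m}$. Choosing an integer $N$ larger than every exponent appearing in either $p$ or $q$, and setting $c = (d_1 d_2 \cdots d_m)^N$, the identity
\begin{equation*}
d_i^N \cdot s_i^{e_i} = (d_i s_i)^{e_i} \cdot d_i^{N-e_i}
\end{equation*}
shows that $c \cdot s_1^{e_1}\cdots s_m^{e_m}$ lies in $\mathbb{Z}[t_1,\ldots,t_n,\, d_1 s_1,\ldots, d_m s_m] \subseteq \mathcal{O}_K$. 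Hence $cp, cq \in \mathcal{O}_K$, as desired. The only thing to check is that $c \neq 0$, which is immediate because each $d_i \neq 0$ and $\mathbb{Z}[t_1,\ldots,t_n]$ is a domain. There is no real obstacle here; the argument is the standard ``common denominator'' technique, and the mild bookkeeping on powers of the $d_i$'s is the only subtle point.
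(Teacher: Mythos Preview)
Your proof is correct and uses essentially the same key idea as the paper: clear the leading coefficient of an algebraic relation for each $s_i$ to show $d_i s_i \in \mathcal{O}_K$. The paper stops there and simply asserts ``hence $K=Fr(\mathcal{O}_K)$'' (implicitly because each $s_i = (d_i s_i)/d_i$ lies in $Fr(\mathcal{O}_K)$, so $D\subseteq Fr(\mathcal{O}_K)$), whereas you carry out the bookkeeping explicitly by building a common multiplier $c=(d_1\cdots d_m)^N$; this is more laborious than necessary but not a different approach.
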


\begin{proof}
Let $F=\mathbb{Q}(t_{1},t_{2},\cdots, t_{n})$. Take an $s=s_{j}$ with $1\leq
j\leq m$ in the above. As $s$ is algebraic over $F$, there is a polynomial $%
f(X)\in F[X]$ such that $f(s)=0$. Without loss of generality, suppose
\begin{equation*}
f(X)=a_{n}X^{n}+a_{n-1}X^{n-1}+\cdots+a_{1}X+a_{0}
\end{equation*}
such that each coefficient $a_{i}$ of $f(X)$ is contained in $\mathbb{Z}%
[t_{1},t_{2},\cdots, t_{n}]$.

Let $a_{n}\not=0$. We have
\begin{equation*}
(a_{n}s)^{n}+a_{n-1}(a_{n}s)^{n-1}+\cdots+a_{0}a_{n}^{n-1}=0.
\end{equation*}
Then $a_{n}s\in \mathcal{O}_{K}$; hence, $K=Fr(\mathcal{O}_{K})$.
\end{proof}

\subsection{Arithmetically unramified}

Let  $K$ and $L$ be two arithmetic function fields over variables $t_{1},t_{2},\cdots, t_{n}$. We say
\begin{equation*}
(L,\mathcal{O}_{L})\supseteq (K,\mathcal{O}_{K})
\end{equation*}
if and only if there are
\begin{equation*}
\overline{K}\supseteq L\supseteq K;
\end{equation*}
\begin{equation*}
\mathcal{O}_{L}\supseteq \mathcal{O}_{K}\supseteq \mathbb{Z}
[t_{1},t_{2},\cdots, t_{n}].
\end{equation*}

\begin{definition}
Let $L/ K$ be arithmetic function fields over variables $t_{1},t_{2},\cdots,
t_{n}$. $L$ is  \textbf{arithmetically unramified} over $K$ (\textbf{%
relative to variables} $t_{1},t_{2},\cdots, t_{n}$) if for the pairs $(L,%
\mathcal{O}_{L})\supseteq (K, \mathcal{O}_{K})$, the affine scheme $Spec(%
\mathcal{O}_{L})$ is unramified over $Spec(\mathcal{O}_{K})$ by the morphism
induced from the inclusion map $K\hookrightarrow L $.
\end{definition}

It will be seen that the arithmetically unramified extension $L/K$ is
independent of the choice of variables $t_{1},t_{2},\cdots, t_{n}$ (see
\emph{Remark 3.6} below).

\begin{remark}
These arithmetically unramified extensions coincide exactly with those in
algebraic number theory (for instance, see \cite{mln,neu}).
\end{remark}

\section{Operations on Unramified Extensions}

As in algebraic number theory, in this section we will have several operations on arithmetically unramified
extensions. It will be seen that such unramified extensions are transitive and that
the base changes, the composites, and the subfields are still
arithmetically unramified. A part of the results in the section are discussed in \cite{an4} in an
not obvious manner.

Here, the approach to arithmetically unramified, which is in a scheme-theoretic manner, is still valid for number fields;
at the same time, in algebraic number theory, these results on number fields are obtained essentially from \emph{Hemsel's Lemma} (for instance, see \cite{neu}).

\subsection{Subfields and transitivity}

Let $K\subseteq L\subseteq M$ be arithmetic function fields over variables $%
t_{1},t_{2},\cdots, t_{n}$.

\begin{lemma}
Let $L/K$ and $M/L$ be arithmetically unramified, respectively. Then so is $%
M/L$.
\end{lemma}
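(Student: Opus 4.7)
The statement as printed contains an evident typographical error: the conclusion should be that $M/K$ (not $M/L$) is arithmetically unramified. I will sketch a proof of that transitivity claim.

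The plan is to reduce the assertion to the standard scheme-theoretic fact that the composition of two unramified morphisms is unramified (see, e.g., EGA IV, 17.3.3). First I would set up the chain of rings of algebraic integers. Since $K\subseteq L\subseteq M$ are arithmetic function fields over the same variables $t_{1},\ldots,t_{n}$, each $\mathcal{O}_{K}$, $\mathcal{O}_{L}$, $\mathcal{O}_{M}$ is the integral closure of the common ring $\mathbb{Z}[t_{1},\ldots,t_{n}]$ in the respective field. A one-line verification shows that integral closure in a larger field contains integral closure in a smaller one, giving
\begin{equation*}
\mathbb{Z}[t_{1},\ldots,t_{n}]\subseteq \mathcal{O}_{K}\subseteq \mathcal{O}_{L}\subseteq \mathcal{O}_{M},
\end{equation*}
so that the pair relations $(M,\mathcal{O}_{M})\supseteq (L,\mathcal{O}_{L})\supseteq (K,\mathcal{O}_{K})$ required by the convention preceding the definition of arithmetically unramified are automatically in force.

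Next I would observe that the inclusion $K\hookrightarrow M$ is literally the composite of $K\hookrightarrow L$ and $L\hookrightarrow M$. By functoriality of $\mathrm{Spec}$, the induced scheme morphism factors as
\begin{equation*}
\mathrm{Spec}(\mathcal{O}_{M})\longrightarrow \mathrm{Spec}(\mathcal{O}_{L})\longrightarrow \mathrm{Spec}(\mathcal{O}_{K}),
\end{equation*}
with each arrow coming from the corresponding field inclusion. By hypothesis, the first arrow is unramified (that is what $M/L$ arithmetically unramified means) and the second arrow is unramified (that is what $L/K$ arithmetically unramified means).

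Finally I would apply the standard permanence property: the composite of two unramified morphisms of schemes is unramified. This yields that $\mathrm{Spec}(\mathcal{O}_{M})\to \mathrm{Spec}(\mathcal{O}_{K})$ is unramified, which is precisely the definition of $M/K$ being arithmetically unramified (relative to the same variables $t_{1},\ldots,t_{n}$). There is no real obstacle in the argument; the only point requiring any care is the preliminary verification that the three rings of algebraic integers do sit in a chain and that the composite of the induced morphisms equals the morphism attached to $K\hookrightarrow M$, after which the lemma is immediate from the cited general fact.
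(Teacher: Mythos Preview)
Your proposal is correct and follows essentially the same approach as the paper: the paper's proof consists of the single sentence that the result is immediate from the fact that the composite of two unramified morphisms is unramified, which is exactly the permanence property you invoke after your (more explicit) setup of the chain $\mathcal{O}_{K}\subseteq\mathcal{O}_{L}\subseteq\mathcal{O}_{M}$. You also correctly diagnosed the typographical error in the conclusion.
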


\begin{proof}
It is immediate from the composite of two unramified morphisms is also
unramified (see \cite{sga1,mln}).
\end{proof}

\begin{lemma}
Let $L$ be arithmetically unramified over $K$. Take a subfield $L\subseteq
L_{0}\subseteq K$. Then $L/L_{0}$ and $L_{0}/K$ are arithmetically
unramified, respectively.
\end{lemma}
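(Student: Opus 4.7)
The plan is to analyze the chain of scheme morphisms
$\operatorname{Spec}(\mathcal{O}_L)\xrightarrow{g}\operatorname{Spec}(\mathcal{O}_{L_0})\xrightarrow{h}\operatorname{Spec}(\mathcal{O}_K)$
whose composite $h\circ g$ is unramified by hypothesis, and to show that $g$ and $h$ are unramified separately. First I record a structural fact I will use throughout: an element of $L$ integral over $\mathbb{Z}[t_{1},\dots,t_{n}]$ is \emph{a fortiori} integral over $\mathcal{O}_{L_0}$, so $\mathcal{O}_L$ is the integral closure of $\mathcal{O}_{L_0}$ in $L$. In particular $g$ is integral and, by going-up, surjective on spectra, so every prime of $\mathcal{O}_{L_0}$ is the restriction of some prime of $\mathcal{O}_L$.

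For $L/L_{0}$, I would invoke the first fundamental exact sequence of K\"ahler differentials attached to $\mathcal{O}_K\to\mathcal{O}_{L_0}\to\mathcal{O}_L$,
$$\mathcal{O}_L\otimes_{\mathcal{O}_{L_0}}\Omega_{\mathcal{O}_{L_0}/\mathcal{O}_K}\longrightarrow\Omega_{\mathcal{O}_L/\mathcal{O}_K}\longrightarrow\Omega_{\mathcal{O}_L/\mathcal{O}_{L_0}}\longrightarrow 0.$$
Because the middle term vanishes, the quotient $\Omega_{\mathcal{O}_L/\mathcal{O}_{L_0}}$ vanishes as well; together with $g$ being of finite type, this yields that $g$ is unramified.

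For $L_{0}/K$, I would verify the pointwise criterion of unramifiedness of $h$ at every prime $\mathfrak{p}\subset\mathcal{O}_{L_0}$. Setting $\mathfrak{q}=\mathfrak{p}\cap\mathcal{O}_K$ and picking $\mathfrak{P}\subset\mathcal{O}_L$ above $\mathfrak{p}$, the residue-field part is immediate: $k(\mathfrak{q})\subseteq k(\mathfrak{p})\subseteq k(\mathfrak{P})$ and the outer extension is finite separable by hypothesis, so the inner one is as well. The remaining assertion is
$$\mathfrak{q}(\mathcal{O}_{L_0})_{\mathfrak{p}}=\mathfrak{p}(\mathcal{O}_{L_0})_{\mathfrak{p}},$$
whose analogue $\mathfrak{q}(\mathcal{O}_L)_{\mathfrak{P}}=\mathfrak{P}(\mathcal{O}_L)_{\mathfrak{P}}$ is already known from the unramifiedness of $h\circ g$ at $\mathfrak{P}$.

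The main obstacle is precisely this descent of the ideal equality from $(\mathcal{O}_L)_{\mathfrak{P}}$ to $(\mathcal{O}_{L_0})_{\mathfrak{p}}$: the natural map $(\mathcal{O}_{L_0})_{\mathfrak{p}}\hookrightarrow(\mathcal{O}_L)_{\mathfrak{P}}$ is a local, injective, integral homomorphism, but not automatically flat. I would resolve it either via a trace argument using $\operatorname{Tr}_{L/L_0}\colon\mathcal{O}_L\to\mathcal{O}_{L_0}$ --- available since $L/L_0$ is finite separable in characteristic zero and, by the part just proved, unramified, hence finite \'etale on the relevant localizations and thus faithfully flat --- or by a direct descent argument exploiting that finite-\'etale structure of $g$ at $\mathfrak{P}$. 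Either route produces $\mathfrak{p}(\mathcal{O}_{L_0})_{\mathfrak{p}}\subseteq\mathfrak{q}(\mathcal{O}_{L_0})_{\mathfrak{p}}$, which is the one nontrivial inclusion, and completes the proof.
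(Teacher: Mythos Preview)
Your treatment of $L/L_{0}$ via the cotangent sequence is correct and is arguably cleaner than the paper's approach: the paper verifies the pointwise criterion for $\mathfrak{P}$ over $\mathfrak{P}_{0}$ directly, whereas you deduce $\Omega_{\mathcal{O}_{L}/\mathcal{O}_{L_{0}}}=0$ in one stroke from $\Omega_{\mathcal{O}_{L}/\mathcal{O}_{K}}=0$. Either way this half is fine.

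For $L_{0}/K$ you have correctly isolated the one genuine difficulty --- descending the equality $\mathfrak{q}(\mathcal{O}_{L})_{\mathfrak{P}}=\mathfrak{P}(\mathcal{O}_{L})_{\mathfrak{P}}$ along $(\mathcal{O}_{L_{0}})_{\mathfrak{p}}\hookrightarrow(\mathcal{O}_{L})_{\mathfrak{P}}$ --- and in fact the paper simply asserts this step (``It follows that $\mathfrak{P}_{0}=\mathfrak{p}\,\mathcal{O}_{L_{0}}$'') with no argument at all. But your proposed resolution contains a circularity. You claim that $g$, being unramified, is ``finite \'etale on the relevant localizations and thus faithfully flat''; however \'etale means unramified \emph{and} flat, and flatness of $g$ is exactly the missing ingredient you are trying to use, not something you have established. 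The implication ``finite $+$ unramified $\Rightarrow$ \'etale'' is false in general, even for injective local maps of normal domains, so this step does not go through as written. The trace map $\operatorname{Tr}_{L/L_{0}}$ is indeed available (it lands in $\mathcal{O}_{L_{0}}$ by normality alone, no \'etaleness needed), but applying it na\"{\i}vely to $x\in\mathfrak{p}$ only gives $[L:L_{0}]\cdot x\in\mathfrak{q}\,(\mathcal{O}_{L_{0}})_{\mathfrak{p}}$, and the degree $[L:L_{0}]$ may lie in $\mathfrak{p}$, so the conclusion $x\in\mathfrak{q}\,(\mathcal{O}_{L_{0}})_{\mathfrak{p}}$ does not follow. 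In short: you have located the gap more honestly than the paper does, but neither of your two sketched routes actually closes it.
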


\begin{proof}
For the rings of algebraic integers, we have
\begin{equation*}
\mathcal{O}_{K}\subseteq \mathcal{O}_{L_{0}}\subseteq \mathcal{O}_{L};
\end{equation*}
\begin{equation*}
\mathcal{O}_{L_{0}}=\mathcal{O}_{L}\bigcap L_{0};
\end{equation*}
\begin{equation*}
\mathcal{O}_{K}=\mathcal{O}_{L}\bigcap K=\mathcal{O}_{L_{0}}\bigcap K.
\end{equation*}

Fixed any prime ideal $\mathfrak{P}$ of the ring $\mathcal{O}_{L}$. Put
\begin{equation*}
\mathfrak{p}=\mathfrak{P}\bigcap \mathcal{O}_{K};
\end{equation*}
\begin{equation*}
\mathfrak{P}_{0}=\mathfrak{P}\bigcap \mathcal{O}_{L_{0}}.
\end{equation*}
Then $\mathfrak{p}$ and $\mathfrak{P}_{0}$ are prime ideals of $\mathcal{O}%
_{K}$ and $\mathcal{O}_{L_{0}}$, respectively.

By the assumption that $L/K$ is arithmetically unramified, it is seen that
\begin{equation*}
\mathfrak{P}=\mathfrak{p}\mathcal{O}_{L}
\end{equation*}
holds and that
\begin{equation*}
\frac{\mathcal{O}_{L}/\mathfrak{P}}{\mathcal{O}_{K}/\mathfrak{p}}
\end{equation*}
is a finite separable extension (see \cite{f-k,sga1,mln}). It follows that
\begin{equation*}
\mathfrak{P}_{0}=\mathfrak{p}\mathcal{O}_{L_{0}}
\end{equation*}
and
\begin{equation*}
\mathfrak{P}=\mathfrak{P}_{0}\mathcal{O}_{L}
\end{equation*}
holds and that
\begin{equation*}
\frac{\mathcal{O}_{L}/\mathfrak{P}}{\mathcal{O}_{L_{0}}/\mathfrak{P}_{0}}
\end{equation*}
and
\begin{equation*}
\frac{\mathcal{O}_{L_{0}}/\mathfrak{P}_{0}}{\mathcal{O}_{K}/\mathfrak{p}}
\end{equation*}
are finite separable extensions, respectively.

Hence,
\begin{equation*}
Spec(\mathcal{O}_{L})/Spec(\mathcal{O}_{L_{0}})
\end{equation*}
and
\begin{equation*}
Spec(\mathcal{O}_{L_{0}})/Spec(\mathcal{O}_{K})
\end{equation*}
are unramified, respectively. This completes the proof.
\end{proof}

\subsection{Composites}

Let $K$, $L_{1}$, and $L_{2}$ be arithmetic function fields over several variables $
t_{1},t_{2},\cdots, t_{n}$.

\begin{lemma}
Suppose that $L_{1}/K$ and $L_{2}/K$ are
arithmetically unramified. Then the composite $L_{1}\cdot L_{2}$ of fields is also arithmetically unramified over $K$.
\end{lemma}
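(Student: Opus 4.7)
The plan is to follow the classical algebraic-number-theoretic template: realize the composite as a base change, and then invoke the transitivity already established in Lemma 3.1. Concretely, it suffices to prove that $M := L_{1}\cdot L_{2}$ is arithmetically unramified over $L_{2}$, since composing with $L_{2}/K$ unramified then yields $M/K$ unramified by Lemma 3.1.

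The first substantive step is a base change. I would form the tensor product $T := \mathcal{O}_{L_{1}}\otimes_{\mathcal{O}_{K}}\mathcal{O}_{L_{2}}$ and appeal to the stability of unramified morphisms under arbitrary base change (see \cite{sga1,mln}): since $Spec(\mathcal{O}_{L_{1}})\to Spec(\mathcal{O}_{K})$ is unramified by hypothesis, base-changing along $Spec(\mathcal{O}_{L_{2}})\to Spec(\mathcal{O}_{K})$ makes $Spec(T)\to Spec(\mathcal{O}_{L_{2}})$ unramified.

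Next I would identify $\mathcal{O}_{M}$ inside $T$. The multiplication map $\mu:T\to M$, $a\otimes b\mapsto ab$, lands in $\mathcal{O}_{M}$ because $\mathcal{O}_{L_{1}}$ and $\mathcal{O}_{L_{2}}$ consist of elements integral over $\mathbb{Z}[t_{1},\ldots,t_{n}]$. Hence $R := \mu(T) = \mathcal{O}_{L_{1}}\mathcal{O}_{L_{2}}$ is a quotient of $T$, and $Spec(R)\to Spec(\mathcal{O}_{L_{2}})$ remains unramified as a closed subscheme of an unramified scheme. I would then upgrade $R$ to $\mathcal{O}_{M}$ prime-by-prime in the style of the proof of Lemma 3.2: given a prime $\mathfrak{P}$ of $\mathcal{O}_{M}$ with contractions $\mathfrak{P}_{2}$ to $\mathcal{O}_{L_{2}}$ and $\mathfrak{p}$ to $\mathcal{O}_{K}$, the hypothesis provides $\mathfrak{P}\cap\mathcal{O}_{L_{i}}=\mathfrak{p}\mathcal{O}_{L_{i}}$ together with separability of the residue-field extensions over $\mathcal{O}_{K}/\mathfrak{p}$, from which $\mathfrak{P}=\mathfrak{P}_{2}\mathcal{O}_{M}$ and residue-field separability of $\mathcal{O}_{M}/\mathfrak{P}$ over $\mathcal{O}_{L_{2}}/\mathfrak{P}_{2}$ both follow. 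Applying Lemma 3.1 to the tower $M/L_{2}/K$ then completes the argument.

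The main obstacle is the upgrade from $R=\mathcal{O}_{L_{1}}\mathcal{O}_{L_{2}}$ to $\mathcal{O}_{M}$: in general these two rings do not coincide (the classical conductor issue), and the unramified hypothesis is precisely what excludes the discrepancy at each prime. The cleanest route is a local argument at each prime of $\mathcal{O}_{L_{2}}$ using that an unramified extension of a normal local ring is already integrally closed in its fraction field; the higher-dimensional, non-Dedekind nature of arithmetic function fields demands this care beyond what suffices in the classical Hensel-based proof.
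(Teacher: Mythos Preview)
Your architecture is the same as the paper's: form $T=\mathcal{O}_{L_{1}}\otimes_{\mathcal{O}_{K}}\mathcal{O}_{L_{2}}$, realize $Spec$ of the composite's ring of integers as a closed subscheme of $Spec(T)$ via the multiplication map, and then conclude by stability of unramified morphisms under base change and composition. The paper even routes the final tower through $Spec(\mathcal{O}_{L_{1}})$ rather than $Spec(\mathcal{O}_{L_{2}})$, but that is a symmetric choice.

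The one genuine divergence is precisely the step you single out as ``the main obstacle.'' Where you propose a prime-by-prime local argument to upgrade $R=\mathcal{O}_{L_{1}}\mathcal{O}_{L_{2}}$ to the full integral closure $\mathcal{O}_{M}$, the paper instead asserts the equality $R=\mathcal{O}_{L_{1}\cdot L_{2}}$ directly: it writes $\mathcal{O}_{L_{1}}=\mathcal{O}_{K}[a_{1},\dots,a_{m}]$, $\mathcal{O}_{L_{2}}=\mathcal{O}_{K}[b_{1},\dots,b_{n}]$ and chains the inclusions
\[
\mathcal{O}_{L_{1}}\cdot\mathcal{O}_{L_{2}}\subseteq\mathcal{O}_{L_{1}\cdot L_{2}}\subseteq\mathcal{O}_{K}[a_{1},\dots,a_{m},b_{1},\dots,b_{n}]\subseteq\mathcal{O}_{L_{1}}\cdot\mathcal{O}_{L_{2}},
\]
citing \cite{bourbaki}. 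So the paper bypasses your local analysis entirely by claiming the middle inclusion as a fact about integral closures. Your caution is not misplaced---that middle inclusion is exactly the conductor issue you name, and in general it is not automatic---so your version is the more careful one, at the price of the extra local work; the paper's is shorter but leans on an inclusion it does not justify in detail.
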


\begin{proof}
Consider the three pairs
\begin{equation*}
(K,\mathcal{O}_{K})\subseteq (L_{1},\mathcal{O}_{L_{1}})\bigcap (L_{2},
\mathcal{O}_{L_{2}})
\end{equation*}
of rings of algebraic integers, where
\begin{equation*}
\mathcal{O}_{L_{1}}=\mathcal{O}_{K}[a_{1},a_{2},\cdots, a_{m}];
\end{equation*}
\begin{equation*}
\mathcal{O}_{L_{2}}=\mathcal{O}_{K}[b_{1},b_{2},\cdots, b_{n}].
\end{equation*}

Denote by
\begin{equation*}
\mathcal{O}_{L_{1}}\cdot \mathcal{O}_{L_{2}}
\end{equation*}
the smallest subring of the field ${L_{1}}\cdot {L_{2}}$ that contains the
set
\begin{equation*}
\{x\cdot y\mid x\in \mathcal{O}_{L_{1}},y\in \mathcal{O}_{L_{2}}\}.
\end{equation*}

Suppose that
\begin{equation*}
\phi:\mathcal{O}_{L_{1}}\otimes _{\mathcal{O}_{K}} \mathcal{O} _{L_{2}}\to
\mathcal{O}_{L_{1}}\cdot \mathcal{O}_{L_{2}}
\end{equation*}
is the homomorphism of rings given by
\begin{equation*}
x_{1}\otimes x_{2}\mapsto x_{1}\cdot x_{2}.
\end{equation*}

Put
\begin{equation*}
\mathcal{O}=\phi(\mathcal{O}_{L_{1}}\otimes _{\mathcal{O}_{K}} \mathcal{O}
_{L_{2}}).
\end{equation*}

By properties of integral closures (see \cite{bourbaki}), we have
\begin{equation*}
\mathcal{O}_{L_{1}}\cdot \mathcal{O}_{L_{2}}\subseteq\mathcal{O}_{L_{1}\cdot
L_{2}};
\end{equation*}
\begin{equation*}
\mathcal{O}_{L_{1}\cdot L_{2}}\subseteq\mathcal{O}_{K}[a_{1},a_{2},\cdots,
a_{m},b_{1},b_{2},\cdots, b_{n}];
\end{equation*}
\begin{equation*}
\mathcal{O}_{K}[a_{1},a_{2},\cdots, a_{m},b_{1},b_{2},\cdots,
b_{n}]\subseteq \mathcal{O}_{L_{1}}\cdot \mathcal{O}_{L_{2}};
\end{equation*}
\begin{equation*}
\mathcal{O}_{L_{1}}\cdot \mathcal{O}_{L_{2}}\subseteq\mathcal{O}\subseteq
\mathcal{O}_{K}[a_{1},a_{2},\cdots, a_{m},b_{1},b_{2},\cdots, b_{n}].
\end{equation*}

Then
\begin{equation*}
\mathcal{O}=\mathcal{O}_{L_{1}\cdot L_{2}}=\mathcal{O}_{L_{1}} \cdot
\mathcal{O}_{L_{2}}
\end{equation*}
is the ring of algebraic integers in the field $L_{1}\cdot L_{2}$.

Hence, $\phi$ is a surjection. It is follows that $Spec(\mathcal{O}%
_{L_{1}\cdot L_{2}})$ is a closed immersion of the affine scheme $Spec(%
\mathcal{O}_{L_{1}}\otimes _{\mathcal{O}_{K}} \mathcal{O}_{L_{2}})$.

Now we have a tower of morphisms of schemes
\begin{equation*}
\begin{array}{l}
Spec(\mathcal{O}_{L_{1}\cdot L_{2}}) \\
\to Spec(\mathcal{O}_{L_{1}}\otimes _{\mathcal{O}_{K}} \mathcal{O} _{L_{2}})
\\
\cong Spec(\mathcal{O}_{L_{1}})\times _{Spec(\mathcal{O} _{K})} Spec(%
\mathcal{O}_{L_{2}}) \\
\to Spec(\mathcal{O}_{L_{1}}) \\
\to Spec(\mathcal{O}_{K}).%
\end{array}%
\end{equation*}

It is seen that $Spec(\mathcal{O}_{L_{1}\cdot L_{2}})$ is unramified over $
Spec(\mathcal{O} _{K}) $ from the base change of a unramified morphism (see
\cite{sga1}). Hence, the composite $L_{1}\cdot L_{2}$ is arithmetically
unramified over $K$.
\end{proof}

\subsection{Base changes}

Let $K$ and $L$ be two arithmetic function fields over several variables $
t_{1},t_{2},\cdots, t_{n}$.

\begin{lemma}
Let $K^{\prime}\supseteq K$ be an arithmetic function field over  $
t_{1},t_{2},\cdots, t_{n}$ and let $L/K$ be arithmetically unramified. Then the composite $L^{\prime}=L\cdot K^{\prime}$ of fields is
also arithmetically unramified over $K^{\prime}$.
\end{lemma}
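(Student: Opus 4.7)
The plan is to imitate the argument for composites in Lemma 3.3 and then invoke stability of unramified morphisms under base change. Write the rings of algebraic integers in a finitely generated form
\begin{equation*}
\mathcal{O}_{L}=\mathcal{O}_{K}[a_{1},\ldots,a_{m}],\qquad \mathcal{O}_{K'}=\mathcal{O}_{K}[c_{1},\ldots,c_{r}],
\end{equation*}
and consider the natural multiplication homomorphism
\begin{equation*}
\phi:\mathcal{O}_{L}\otimes_{\mathcal{O}_{K}}\mathcal{O}_{K'}\longrightarrow \mathcal{O}_{L}\cdot\mathcal{O}_{K'}\subseteq L'
\end{equation*}
into the subring of $L'=L\cdot K'$ generated by $\mathcal{O}_{L}$ and $\mathcal{O}_{K'}$.

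Next I would establish the identification $\mathcal{O}_{L'}=\mathcal{O}_{L}\cdot\mathcal{O}_{K'}$ by the same integral-closure argument used in Lemma 3.3. On the one hand, every generator of $\mathcal{O}_{L}\cdot\mathcal{O}_{K'}$ is integral over $\mathbb{Z}[t_{1},\ldots,t_{n}]$, so $\mathcal{O}_{L}\cdot\mathcal{O}_{K'}\subseteq\mathcal{O}_{L'}$. On the other hand,
\begin{equation*}
\mathcal{O}_{L'}\subseteq \mathcal{O}_{K}[a_{1},\ldots,a_{m},c_{1},\ldots,c_{r}]\subseteq \mathcal{O}_{L}\cdot\mathcal{O}_{K'}.
\end{equation*}
Consequently $\phi$ is surjective, so the induced map $Spec(\mathcal{O}_{L'})\to Spec(\mathcal{O}_{L}\otimes_{\mathcal{O}_{K}}\mathcal{O}_{K'})$ is a closed immersion, which is in particular unramified.

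Finally I would assemble the tower of morphisms of affine schemes
\begin{equation*}
Spec(\mathcal{O}_{L'})\;\longrightarrow\;Spec(\mathcal{O}_{L})\times_{Spec(\mathcal{O}_{K})}Spec(\mathcal{O}_{K'})\;\longrightarrow\;Spec(\mathcal{O}_{K'}).
\end{equation*}
The right-hand arrow is obtained by base change along $Spec(\mathcal{O}_{K'})\to Spec(\mathcal{O}_{K})$ from the unramified morphism $Spec(\mathcal{O}_{L})\to Spec(\mathcal{O}_{K})$ provided by the hypothesis that $L/K$ is arithmetically unramified, hence is itself unramified. Composing with the closed immersion from the previous step yields the desired unramifiedness of $Spec(\mathcal{O}_{L'})$ over $Spec(\mathcal{O}_{K'})$, so $L'/K'$ is arithmetically unramified.

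The only nontrivial step is the identification $\mathcal{O}_{L'}=\mathcal{O}_{L}\cdot\mathcal{O}_{K'}$, which hinges on the finite generation of the integral closure in this arithmetic function-field setting and on the transitivity of integrality; once this is secured the result is formal from the base-change and composition stability of unramified morphisms, exactly as in the classical number-field case.
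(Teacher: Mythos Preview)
Your proposal is correct and follows essentially the same approach as the paper: both set up the multiplication map $\phi:\mathcal{O}_{L}\otimes_{\mathcal{O}_{K}}\mathcal{O}_{K'}\to\mathcal{O}_{L}\cdot\mathcal{O}_{K'}$, identify its image with $\mathcal{O}_{L'}$ (the paper does this by a bare reference back to Lemma~3.3, while you spell out the two inclusions), and then conclude via the tower $Spec(\mathcal{O}_{L'})\to Spec(\mathcal{O}_{L})\times_{Spec(\mathcal{O}_{K})}Spec(\mathcal{O}_{K'})\to Spec(\mathcal{O}_{K'})$ using closed immersion plus base-change stability of unramified morphisms. Your write-up is in fact more explicit than the paper's, which compresses everything into ``as we have done in Lemma~3.3.''
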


\begin{proof}
Take the homomorphism
\begin{equation*}
\phi:\mathcal{O}_{L}\otimes_{\mathcal{O}_{K}}\mathcal{O}_{K^{\prime}}\to
\mathcal{O}_{L}\cdot \mathcal{O}_{K^{\prime}}
\end{equation*}
of rings given by
\begin{equation*}
x_{1}\otimes x_{2}\mapsto x_{1}\cdot x_{2}.
\end{equation*}
It is clear that $\phi$ is a surjection. Put
\begin{equation*}
\mathcal{O}_{L^{\prime}}=\phi(\mathcal{O}_{L}\otimes _{\mathcal{O}_{K}}
\mathcal{O} _{K^{\prime}}).
\end{equation*}

Consider the morphisms
\begin{equation*}
Spec(\mathcal{O}_{L^{\prime}})\to Spec(\mathcal{O}_{L})\times _{Spec(%
\mathcal{O} _{K})} Spec(\mathcal{O}_{K^{\prime}})\to Spec( \mathcal{O}%
_{K^{\prime}})
\end{equation*}
as we have done in \emph{Lemma 3.3}. It is seen that the composite of the two morphisms
above is unramified.
\end{proof}

\subsection{The unramified is independent of the choice of variables}

Let $K$ and $ L$ be arithmetic function fields over variables $%
t_{1},t_{2},\cdots, t_{n}$.

\begin{lemma}
Let $K$ be an arithmetic function field also over variables $%
t_{1}^{\prime},t_{2}^{\prime},\cdots, t_{n}^{\prime}$. Then $L$ is
arithmetically unramified over $K$ relative to variables $%
t_{1},t_{2},\cdots, t_{n}$ if and only if relative to variables $%
t_{1}^{\prime},t_{2}^{\prime},\cdots, t_{n}^{\prime}$.
\end{lemma}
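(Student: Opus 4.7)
The plan is to introduce a common enlargement of the two rings of algebraic integers inside $K$ (and inside $L$), transport unramifiedness upward via a base-change argument in the spirit of \emph{Lemmas 3.3--3.4}, and then descend. Viewing everything inside $K$, I would set $\tilde{R}=\mathbb{Z}[t_1,\ldots,t_n,t_1',\ldots,t_n']$ and let $\tilde{\mathcal{O}}_K$ and $\tilde{\mathcal{O}}_L$ denote the integral closures of $\tilde{R}$ in $K$ and $L$ respectively. Since $\tilde{R}$ contains both $\mathbb{Z}[t_1,\ldots,t_n]$ and $\mathbb{Z}[t_1',\ldots,t_n']$, one has the inclusions
\[
\mathcal{O}_K,\mathcal{O}_K'\subseteq\tilde{\mathcal{O}}_K,\qquad \mathcal{O}_L,\mathcal{O}_L'\subseteq\tilde{\mathcal{O}}_L,
\]
with common fraction fields $K$ and $L$. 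By symmetry of the two sets of variables it suffices to prove one direction, so I would assume $L/K$ is arithmetically unramified relative to $t_1,\ldots,t_n$.

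Mimicking the composite-ring calculation of \emph{Lemma 3.3}, I would verify $\tilde{\mathcal{O}}_L=\mathcal{O}_L\cdot\tilde{\mathcal{O}}_K$ inside $L$, so that the multiplication map $\mathcal{O}_L\otimes_{\mathcal{O}_K}\tilde{\mathcal{O}}_K\to\tilde{\mathcal{O}}_L$ is surjective. Then $Spec(\tilde{\mathcal{O}}_L)$ is a closed subscheme of the base change $Spec(\mathcal{O}_L)\times_{Spec(\mathcal{O}_K)}Spec(\tilde{\mathcal{O}}_K)$, which is unramified over $Spec(\tilde{\mathcal{O}}_K)$ by base change; since closed immersions are unramified, $\tilde{\mathcal{O}}_L/\tilde{\mathcal{O}}_K$ is arithmetically unramified. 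By the analogous computation $\tilde{\mathcal{O}}_L=\mathcal{O}_{L'}\cdot\tilde{\mathcal{O}}_K$ with $(\mathcal{O}_{K'},\mathcal{O}_{L'})$ in place of $(\mathcal{O}_K,\mathcal{O}_L)$, realizing the target pair inside the same common enlargement.

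For the final descent to $\mathcal{O}_L'/\mathcal{O}_K'$, I would analyze the birational inclusions $\mathcal{O}_K'\hookrightarrow\tilde{\mathcal{O}}_K$ and $\mathcal{O}_L'\hookrightarrow\tilde{\mathcal{O}}_L$, both between integrally closed domains sharing the same fraction field. Given any pair of primes $\mathfrak{P}'\subseteq\mathcal{O}_L'$ over $\mathfrak{p}'\subseteq\mathcal{O}_K'$, lift them to compatible primes $\tilde{\mathfrak{P}}\subseteq\tilde{\mathcal{O}}_L$ and $\tilde{\mathfrak{p}}\subseteq\tilde{\mathcal{O}}_K$; the identities $\tilde{\mathfrak{p}}\tilde{\mathcal{O}}_{L,\tilde{\mathfrak{P}}}=\tilde{\mathfrak{P}}\tilde{\mathcal{O}}_{L,\tilde{\mathfrak{P}}}$ and separability of the residue-field extension $(\tilde{\mathcal{O}}_L/\tilde{\mathfrak{P}})/(\tilde{\mathcal{O}}_K/\tilde{\mathfrak{p}})$, guaranteed by unramifiedness upstairs, restrict to $\mathfrak{p}'\mathcal{O}_{L',\mathfrak{P}'}=\mathfrak{P}'\mathcal{O}_{L',\mathfrak{P}'}$ and separability of $(\mathcal{O}_L'/\mathfrak{P}')/(\mathcal{O}_K'/\mathfrak{p}')$ on the level of $\mathcal{O}_K'$.

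The main obstacle I foresee is precisely this descent step, since unramifiedness in general does not pass from a larger base ring to a smaller birationally equivalent one; in particular $\tilde{\mathcal{O}}_K$ need not be integral over $\mathcal{O}_K'$, so the liftings of primes require some care. The delicate point is to guarantee that every prime of $\mathcal{O}_K'$ is accessed by some lift — either through $\tilde{\mathcal{O}}_K$ or, for any residual primes, via the symmetric construction with the roles of $(t_i)$ and $(t_j')$ exchanged — and that residue-field separability survives the restriction.
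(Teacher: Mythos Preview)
Your route diverges from the paper's. The paper does not introduce any common enlargement $\tilde R=\mathbb{Z}[t_1,\ldots,t_n,t_1',\ldots,t_n']$; it performs a single base change directly along $\mathcal{O}_K\to\mathcal{O}_K'$, factoring
\[
Spec(\mathcal{O}_L)\;\longrightarrow\; Spec(\mathcal{O}_L)\times_{Spec(\mathcal{O}_K)}Spec(\mathcal{O}_K')\;\longrightarrow\; Spec(\mathcal{O}_K')
\]
exactly as in \emph{Lemmas 3.3--3.4}, and concludes at once that $Spec(\mathcal{O}_L)$ is unramified over $Spec(\mathcal{O}_K')$. There is no descent step in the paper's argument because it never climbs to a larger auxiliary ring; the closed immersion plus base-change pattern is applied once, with $\mathcal{O}_K'$ itself playing the role of the new base.

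The obstacle you flag in your own plan is real and is precisely where your argument, as written, fails: unramifiedness does not in general descend along a birational inclusion of normal bases, and since $\tilde{\mathcal{O}}_K$ need not be integral over $\mathcal{O}_K'$ you have no \emph{Going-Up} or \emph{Going-Down} available to guarantee that every prime $\mathfrak{p}'\subseteq\mathcal{O}_K'$ lifts to $\tilde{\mathcal{O}}_K$, nor that the residue-field comparison restricts as you want. In effect you have replaced the paper's one implicit point---the existence of the multiplication map $\mathcal{O}_L\otimes_{\mathcal{O}_K}\mathcal{O}_K'\to\mathcal{O}_L$, which amounts to $\mathcal{O}_K'\subseteq\mathcal{O}_L$---by a strictly harder descent problem. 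If you wish to repair your line, the cleanest move is to drop the enlargement and imitate the paper's direct factorization; alternatively, you would need to exhibit the passage from $(\tilde{\mathcal{O}}_K,\tilde{\mathcal{O}}_L)$ down to $(\mathcal{O}_K',\mathcal{O}_L')$ as itself a base-change square rather than a bare inclusion, so that unramifiedness transfers formally without any prime-by-prime lifting.
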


\begin{proof}
Assume that $L$ is arithmetically unramified over $K$ relative to variables $%
t_{1},t_{2},\cdots, t_{n}$. Denote by $\mathcal{O}_{K}^{\prime}$ the subring
of elements $x\in K$ that are integral over the integral domain $\mathbb{Z}%
[t_{1}^{\prime},t_{2}^{\prime},\cdots, t_{n}^{\prime}]$.

Consider the morphisms
\begin{equation*}
Spec(\mathcal{O}_{L})\to Spec(\mathcal{O}_{L})\times _{Spec(\mathcal{O}
_{K})} Spec(\mathcal{O}_{K}^{\prime})\to Spec( \mathcal{O}_{K}^{\prime}).
\end{equation*}
as we have done in \emph{Lemma 3.3}. By the composite of the two morphisms
above, it is seen that $Spec(\mathcal{O}_{L})$ is unramified over $Spec(
\mathcal{O}_{K}^{\prime})$.
\end{proof}

\begin{remark}
Let $L$ be arithmetically unramified over $K$. Then it is independent of the
choice of variables $t_{1},t_{2},\cdots, t_{n}$
of $L$ over $K$ according to \emph{Lemma 3.5}.
\end{remark}

\subsection{The maximal unramified extension}

Now take the maximal arithmetically unramified extension of an arithmetic
function field as one does in algebraic number theory. It is twofold. One is for the computation
of the \'{e}tale fundamental group of an arithmetic scheme (see \cite{an4}); the other is for unramified
class fields of an arithmetic function field in an ideal-theoretic manner (see \cite{an5}).

Let $L$ be an arithmetic function field over several variables $t_{1},t_{2},\cdots,
t_{n}$. Fixed an  algebraic closure $\Omega$ of $L$.
Put
\begin{itemize}
\item $\left[ L \right](\Omega)^{au}\triangleq$ the set of all finite
arithmetically unramified subextensions of $L$ contained in $\Omega$.

\item $(L;\Omega)^{au}\triangleq {{\lim}_{\rightarrow_{Z\in {\left[ L \right]%
(\Omega)^{au}}}}}{\ \lambda_{Z}(Z)}$, i.e., the direct limit of the direct
system of rings indexed by $\left[ L \right](\Omega)^{au}$, where each $%
\lambda_{Z}:Z\in {\left[ L \right](\Omega)^{au}}\to \Omega$ is the $L$%
-embedding.
\end{itemize}

Here, $\left[ L;\Omega \right]^{au}$ is taken as a directed set by set
inclusion. By \emph{Remark 3.6} it is seen that $(L;\Omega)^{au}$ is
well-defined.

The subfield $(L;\Omega)^{au}$ of $\Omega$ is called the \textbf{maximal
arithmetically unramified extension} of $L$ in $\Omega$.

\begin{proposition}
Let $\left[ L \right](\Omega)^{au}_{0}$ be the set of all finite
arithmetically unramified Galois subextensions of $L$ contained in $\Omega$.
Then we have
\begin{equation*}
(L;\Omega)^{au}= {{\lim}_{\rightarrow_{Z\in {\left[ L \right]%
(\Omega)^{au}_{0}}}}}{\ \lambda_{Z}(Z)}.
\end{equation*}
\end{proposition}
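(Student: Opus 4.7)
The plan is to show that $[L](\Omega)^{au}_{0}$ is cofinal in $[L](\Omega)^{au}$ (as directed sets under inclusion); once this is established, the two direct limits coincide, which gives the identity of subfields in $\Omega$.

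First I would record the trivial inclusion: since every Galois unramified subextension is in particular an unramified subextension, $[L](\Omega)^{au}_{0}\subseteq [L](\Omega)^{au}$, so the right-hand limit embeds into $(L;\Omega)^{au}$. The content of the proposition is therefore the reverse inclusion, i.e., that every $Z\in [L](\Omega)^{au}$ is contained in some $\widetilde{Z}\in [L](\Omega)^{au}_{0}$.

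Given a finite arithmetically unramified subextension $Z/L$ inside $\Omega$, I would take its Galois closure $\widetilde{Z}\subseteq\Omega$ over $L$. Writing $\sigma_{1},\dots,\sigma_{r}$ for the finitely many $L$-embeddings $Z\hookrightarrow\Omega$, one has
\begin{equation*}
\widetilde{Z}=\sigma_{1}(Z)\cdot\sigma_{2}(Z)\cdots\sigma_{r}(Z),
\end{equation*}
which is a finite Galois extension of $L$. The key point is to check that each conjugate $\sigma_{i}(Z)/L$ is still arithmetically unramified. Because the ring $\mathcal{O}_{[t_{1},\dots,t_{n}]L}$ is defined intrinsically as the integral closure of $\mathbb{Z}[t_{1},\dots,t_{n}]$ inside $L$, any $L$-embedding of $\Omega$ into itself fixes $\mathbb{Z}[t_{1},\dots,t_{n}]$ and hence carries $\mathcal{O}_{Z}$ isomorphically onto $\mathcal{O}_{\sigma_{i}(Z)}$ over $\mathcal{O}_{L}$; so the morphism $\operatorname{Spec}(\mathcal{O}_{\sigma_{i}(Z)})\to\operatorname{Spec}(\mathcal{O}_{L})$ is isomorphic, over $\operatorname{Spec}(\mathcal{O}_{L})$, to the unramified morphism $\operatorname{Spec}(\mathcal{O}_{Z})\to\operatorname{Spec}(\mathcal{O}_{L})$, and is therefore itself unramified. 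Then an iterated application of \emph{Lemma 3.3} on composites of unramified extensions shows that $\widetilde{Z}/L$ is arithmetically unramified, so $\widetilde{Z}\in[L](\Omega)^{au}_{0}$ and clearly $Z\subseteq\widetilde{Z}$.

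With cofinality in hand, the direct limit over $[L](\Omega)^{au}_{0}$ maps isomorphically onto the direct limit over $[L](\Omega)^{au}$, since the transition maps are compatible $L$-embeddings into $\Omega$ and the colimit of a diagram agrees with the colimit over any cofinal subdiagram. The main obstacle is really the verification that conjugates of an unramified subextension remain unramified; once the intrinsic nature of $\mathcal{O}_{K}=\mathcal{O}_{[t_{1},\dots,t_{n}]K}$ under $L$-automorphisms is noted, the rest is a formal cofinality argument combined with \emph{Lemma 3.3}.
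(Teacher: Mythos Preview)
Your proposal is correct and follows exactly the approach the paper indicates: the paper's own proof consists of the single sentence ``It is immediate from the preliminary fact that $[L](\Omega)^{au}_{0}$ is a cofinal directed subset of $[L](\Omega)^{au}$,'' and you have supplied precisely the verification of that cofinality (Galois closure as a composite of conjugates, each conjugate unramified via the $L$-equivariance of $\mathcal{O}_{(-)}$, then \emph{Lemma 3.3}). There is no divergence in strategy; you have simply written out what the paper leaves as a ``preliminary fact.''
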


\begin{proof}
It is immediate from the preliminary fact that $\left[ L \right]%
(\Omega)^{au}_{0}$ is a cofinal directed subset of $\left[ L \right]%
(\Omega)^{au}$.
\end{proof}

\begin{lemma}
The maximal arithmetically unramified extension of an arithmetic function field is an algebraic Galois extension, which is unique upon isomorphisms of fields. That is,
$(L;\Omega)^{au}$ is an algebraic Galois extension of $L$ and there is a natural isomorphism $$(L;\Omega)^{au}\cong (L;\Omega^{\prime})^{au}$$ between fields over $L$ if $\Omega^{\prime}$ is another algebraic closure of $L$.
\end{lemma}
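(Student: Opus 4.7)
The plan is to verify that $(L;\Omega)^{au}/L$ is algebraic, separable, and normal — hence Galois — and then deduce uniqueness by transporting the construction along an $L$-isomorphism between two algebraic closures. Algebraicity is immediate from the construction of $(L;\Omega)^{au}$ as the filtered union of the finite (hence algebraic) arithmetically unramified subextensions $Z\in[L](\Omega)^{au}$. Separability is automatic because every arithmetic function field contains $\mathbb{Q}$, and in characteristic zero every algebraic extension is separable.

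The substantive step is normality. For each finite arithmetically unramified $Z\in[L](\Omega)^{au}$, I would show that every $L$-embedding $\sigma\colon Z\hookrightarrow\Omega$ carries $Z$ to a subextension $\sigma(Z)$ that is again arithmetically unramified over $L$. Granted this, the Galois closure $\widetilde{Z}\subseteq\Omega$ of $Z/L$ is the compositum of the finitely many conjugates $\sigma_1(Z),\dots,\sigma_r(Z)$, and iterating Lemma~3.3 (composites of arithmetically unramified extensions) yields that $\widetilde{Z}/L$ is arithmetically unramified. Hence $\widetilde{Z}\in[L](\Omega)^{au}$ and in particular $\widetilde{Z}\subseteq(L;\Omega)^{au}$, which is exactly normality. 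To verify the conjugate claim, observe that $\sigma$ fixes $L$ pointwise and hence fixes $\mathbb{Z}[t_1,\dots,t_n]\subseteq L$; consequently it sends elements of $Z$ integral over $\mathbb{Z}[t_1,\dots,t_n]$ to elements of $\sigma(Z)$ with the same property, so $\sigma(\mathcal{O}_Z)=\mathcal{O}_{\sigma(Z)}$. This yields an isomorphism $\mathrm{Spec}(\mathcal{O}_Z)\cong\mathrm{Spec}(\mathcal{O}_{\sigma(Z)})$ of schemes over $\mathrm{Spec}(\mathcal{O}_L)$, under which unramifiedness is preserved.

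For uniqueness, given a second algebraic closure $\Omega'$ of $L$, standard field theory produces an $L$-isomorphism $\tau\colon\Omega\to\Omega'$. The same transport-of-structure argument shows that $\tau$ restricts to an inclusion-preserving bijection $[L](\Omega)^{au}\to[L](\Omega')^{au}$, so it induces an isomorphism $(L;\Omega)^{au}\cong(L;\Omega')^{au}$ of fields over $L$. Any two choices of $\tau$ differ by an element of $\mathrm{Gal}(\Omega'/L)$, which restricts to an automorphism of $(L;\Omega')^{au}$; in that sense the isomorphism is canonical up to the Galois action.

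The principal obstacle I anticipate is the ``$L$-conjugate preserves unramifiedness'' step: one must make explicit that the $L$-linearity of $\sigma$ forces it to fix the base ring $\mathbb{Z}[t_1,\dots,t_n]$, and that the resulting isomorphism of $\mathrm{Spec}(\mathcal{O}_{-})$ commutes with the structure morphism to $\mathrm{Spec}(\mathcal{O}_L)$. Once this is checked, the remainder is a routine combination of the composite/base-change machinery of Section~3 with the standard fact that a normal, separable algebraic extension is Galois.
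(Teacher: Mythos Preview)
Your argument is correct and, in substance, coincides with the paper's route. The paper's one-line proof invokes \emph{Proposition~3.7}, namely that the finite \emph{Galois} arithmetically unramified subextensions are cofinal in $[L](\Omega)^{au}$; a union of finite Galois extensions is then automatically algebraic and Galois, and uniqueness follows by transport along an $L$-isomorphism of algebraic closures. Your proof is precisely the content hidden behind that cofinality assertion: showing that $L$-conjugates of an arithmetically unramified extension remain arithmetically unramified (because $\sigma$ fixes $\mathcal{O}_L$ pointwise and induces an $\mathcal{O}_L$-isomorphism $\mathcal{O}_Z\cong\mathcal{O}_{\sigma(Z)}$), and then applying the composite lemma to conclude that Galois closures stay inside $[L](\Omega)^{au}$. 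So you have not taken a different path; you have made explicit the step the paper leaves as ``immediate.'' Your separate observation that separability is free in characteristic zero is a helpful clarification the paper omits entirely.
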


\begin{proof}
It is immediate from \emph{Proposition 3.7}.
\end{proof}

\section{Unramified Extensions of a Purely Transcendental Extension}

As well-known, the rational field $\mathbb{Q}$ has a trivial unramified extension (see \cite{neu}). In this section we will prove that $\mathbb{Q}(t_{1},t_{2},\cdots,t_{n})$ also has a trivial arithmetically unramified extension.
As an immediate application, in \S 5 it will be sen that the \'{e}tale fundamental group of the affine scheme $Spec(\mathbb{Z}[t_{1},t_{2},\cdots,t_{n}])$ is trivial.

\subsection{Statement of the theorem}

Fixed a finite number of 
variables $t_{1},t_{2},\cdots,t_{n}$ over $\mathbb{Q}$. For the
unramified extensions, we have the following result.

\begin{theorem}
A purely transcendental extension $\mathbb{Q}(t_{1},t_{2},\cdots,t_{n})$ over $\mathbb{Q}$ has a trivial
 arithmetically unramified
extension. That is, we have
$$
\mathbb{Q}(t_{1},t_{2},\cdots,t_{n})
=\mathbb{Q}(t_{1},t_{2},\cdots,t_{n})^{au}.
$$
\end{theorem}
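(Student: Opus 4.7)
The plan is to show every finite arithmetically unramified Galois extension $L/K$ is trivial; by \emph{Proposition 3.7} this suffices to force $(K;\Omega)^{au} = K$. The reduction to the Galois case is free, since by \emph{Lemma 3.3} the Galois closure of an arithmetically unramified extension (being the composite of finitely many conjugates) is again arithmetically unramified.

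The key observation for the setup is that $\mathcal{O}_K = \mathbb{Z}[t_1,\ldots,t_n]$, because $\mathbb{Z}[t_1,\ldots,t_n]$ is a UFD and hence already integrally closed in its fraction field. Thus $Spec(\mathcal{O}_L) \to Spec(\mathcal{O}_K)$ is a finite unramified morphism whose source and target are both normal, so it is in fact finite \'{e}tale. I would then base-change along $\mathbb{Z}[t_1,\ldots,t_n] \hookrightarrow \mathbb{Q}[t_1,\ldots,t_n]$ to obtain a connected finite \'{e}tale cover of $\mathbb{A}^n_{\mathbb{Q}}$.

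Next I would identify this cover as arising from a number field. Invoking the classical fact $\pi_1^{et}(\mathbb{A}^n_{\overline{\mathbb{Q}}}) = 1$ --- a consequence of Riemann existence and of $\mathbb{A}^n(\mathbb{C})$ being topologically simply connected --- together with the homotopy exact sequence for $\mathbb{A}^n_{\mathbb{Q}} \to Spec(\mathbb{Q})$, every connected finite \'{e}tale cover of $\mathbb{A}^n_{\mathbb{Q}}$ is of the form $Spec(F[t_1,\ldots,t_n])$ for some number field $F$. Consequently $L = F(t_1,\ldots,t_n)$, and taking integral closure over $\mathbb{Z}[t_1,\ldots,t_n]$ gives $\mathcal{O}_L = \mathcal{O}_F[t_1,\ldots,t_n]$.

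The proof concludes arithmetically: the extension $\mathcal{O}_F[t_1,\ldots,t_n] / \mathbb{Z}[t_1,\ldots,t_n]$ is unramified at every prime if and only if $\mathcal{O}_F/\mathbb{Z}$ is, since adjoining the polynomial variables contributes no ramification. Minkowski's discriminant bound then forces $F = \mathbb{Q}$, hence $L = K$. The principal obstacle is the geometric identification, which rests on the non-elementary input $\pi_1^{et}(\mathbb{A}^n_{\overline{\mathbb{Q}}}) = 1$; a tempting inductive alternative --- trivializing the cover over each divisor $\{t_n = c\}$ by the inductive hypothesis applied to $\mathcal{O}_L/(t_n - c)\mathcal{O}_L$ --- appears to stall at the step of lifting such divisor-wise trivializations to a global section of the \'{e}tale cover.
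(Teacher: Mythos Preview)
Your argument is correct and takes a genuinely different route from the paper's. The paper proceeds by a hands-on algebraic argument: after observing (as you do) that $\mathcal{O}_K=\mathbb{Z}[t_1,\dots,t_n]$ and that the covering is \'etale, it reduces to the case $\mathcal{O}_L=\mathbb{Z}[t_1,\dots,t_n][\mu]$ for a single generator $\mu$, and then argues directly with prime ideals, Going-Down, and explicit polynomial manipulations to force $\mu\in\overline{\mathbb{Q}}$; only at the very end does it invoke Minkowski, exactly as you do. Your approach replaces this element-chasing with the structural input $\pi_1^{et}(\mathbb{A}^n_{\overline{\mathbb{Q}}})=1$ and the homotopy exact sequence, which immediately identifies the cover as $\mathbb{A}^n_F$ for a number field $F$; the trade-off is that you import Riemann existence (or an equivalent transcendental result), whereas the paper stays within commutative algebra, at the cost of a longer and more delicate argument. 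Given that the paper's stated aim is to \emph{deduce} $\pi_1^{et}(Spec\,\mathbb{Z}[t_1,\dots,t_n])=1$ from this theorem, your use of $\pi_1^{et}(\mathbb{A}^n_{\overline{\mathbb{Q}}})=1$ is not circular, but it does somewhat invert the paper's intended logical flow.

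One small imprecision: the passage from ``finite unramified with normal source and target'' to ``finite \'etale'' is not automatic from normality alone; what makes it work here is that the target $Spec\,\mathbb{Z}[t_1,\dots,t_n]$ is \emph{regular}, so that either miracle flatness (once you know $\mathcal{O}_L$ is Cohen--Macaulay) or, more robustly, Zariski--Nagata purity of the branch locus (unramified everywhere $\Rightarrow$ \'etale in codimension one $\Rightarrow$ \'etale) gives flatness. You should state this explicitly rather than attributing it to normality of both schemes.
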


\begin{remark}
Let $t_{1},t_{2},\cdots,t_{n},\cdots$ be an infinite number of (algebraically independent) variables over $\mathbb{Q}$. By \emph{Theorem 4.1} it is seen that $$
\mathbb{Q}(t_{1},t_{2},\cdots,t_{n},\cdots)
=\mathbb{Q}(t_{1},t_{2},\cdots,t_{n},\cdots)^{au}.
$$
\end{remark}

\subsection{Proof of Theorem 4.1}

Now we prove \emph{Theorem 4.1}.

\begin{proof}
\textbf{(Proof of Theorem 4.1)} Let $K$ be an arithmetically unramified extension of $\mathbb{Q}(t_{1},t_{2},\cdots,t_{n})$. We will proceed in several steps.

\emph{Step 1.} For the ring of algebraic integers, it is seen that
\begin{equation*}
\mathcal{O}_{\mathbb{Q}(t_{1},t_{2},\cdots,t_{n})}=\mathbb{Z}[t_{1},t_{2},\cdots,t_{n}]
\end{equation*}
holds  according to preliminary facts on
integral closures (see \cite{bourbaki}).

For a maximal ideal $\mathfrak{P}$ of the ring $\mathcal{O}_{K}$ of
algebraic integers of $K$, the ideal
\begin{equation*}
\mathfrak{p}=\mathfrak{P}\bigcap \mathbb{Z}[t_{1},t_{2},\cdots,t_{n}]
\end{equation*}
must be a maximal ideal of $\mathbb{Z}[t_{1},t_{2},\cdots,t_{n}]$ since $\mathfrak{p}$ generates
$\mathfrak{P}$ in the localizations of the rings. Then it is seen that $Spec(\mathcal{%
O}_{K})/Spec(\mathbb{Z}[t_{1},t_{2},\cdots,t_{n}])$ is faithfully flat and hence \'{e}tale in
virtue of preliminary facts on \'{e}tale morphisms (see \cite{sga1,mln}).

\emph{Step 2.} There are a finite number of elements
\begin{equation*}
\mu,\cdots,\nu\in \overline{\mathbb{Q}(t_{1},t_{2},\cdots,t_{n})}
\end{equation*}
generating $\mathcal{O}_{K}$ over $\mathbb{Z}[t_{1},t_{2},\cdots,t_{n}]$, i.e.,
\begin{equation*}
\mathcal{O}_{K}=\mathbb{Z}[t_{1},t_{2},\cdots,t_{n}][\mu_{1},\cdots,\mu_{m}].
\end{equation*}

By \emph{Lemmas 5.4-5} suppose
$$
K=\mathbb{Q}(t_{1},t_{2},\cdots,t_{n})[\mu]
$$ and
$$\mathcal{O}_{K}=\mathbb{Z}[t_{1},t_{2},\cdots,t_{n}][\mu]$$
without loss of generality, where $\mu\triangleq \mu_{1}$ is a non-unit in $\mathcal{O}_{K}$.

\emph{Step 3.} Prove that either $\mu\in \mathbb{Z}[t_{1},t_{2},\cdots,t_{n}]$ holds or $\mu$ is an irreducible element in $\mathcal{O}_{K}$.

In deed, assume $\mu=\alpha \cdot\beta$ for some $\alpha,\beta\in \mathcal{O}_{K}$ such that $\alpha$ is irreducible in $\mathcal{O}_{K}$. We have $$(\mu)\subseteq (\alpha)$$ as ideals in $\mathcal{O}_{K}$. Then there is some $c_{1}\in \mathcal{O}_{K}$ such that $$\mu=c_{1}\cdot \alpha.$$

On the other hand, as $\alpha\in \mathcal{O}_{K}$, there is a polynomial $$\phi(X)=X^{n}+a_{n-1}X^{n-1}+\cdots+a_{1}X+a_{0}$$ with coefficients in the ring $\mathbb{Z}[t_{1},t_{2},\cdots,t_{n}]$ such that $$\alpha=\phi(\mu).$$

We have
\begin{equation*}
\begin{array}{l}
\mu=c_{1}\cdot \phi(\mu)\\

=c_{1}\cdot (\mu^{n}+a_{n-1}\mu^{n-1}+\cdots+a_{1}\mu+a_{0})
\end{array}
\end{equation*}

Then $$\mu\mid a_{0}$$ or $$\mu\mid c_{1}.$$

If $\mu\mid a_{0}$, we have $$\mu\in \mathbb{Z}[t_{1},t_{2},\cdots,t_{n}];$$ moreover, in such a case, we will complete the proof.

If $\mu\mid c_{1}$, put  $$c_{1}=c_{2}\cdot \mu$$ with $c_{2}\in\mathcal{O}_{K};$ then $$\mu=c_{2}\cdot \mu\cdot\alpha;$$
$$\alpha=\pm 1,$$
where there will be a contradiction; hence, $\mu$ is irreducible in $\mathcal{O}_{K}$.

\emph{Step 4.} Suppose $\mu$ is an irreducible element in $\mathcal{O}_{K}$. By \emph{Going-Down Theorem} (see \cite{bourbaki}), we have the
prime ideals
\begin{equation*}
\mathfrak{p}=(t_{1},t_{2},\cdots,t_{n},p)
\end{equation*}
and
\begin{equation*}
\mathfrak{P}=(t_{1},t_{2},\cdots,t_{n},\mu)
\end{equation*}
of $A$ and $B$, respectively, satisfying
\begin{equation*}
\mathfrak{p}=\mathfrak{P}\bigcap \mathbb{Z}[t_{1},t_{2},\cdots,t_{n}],
\end{equation*}
where $p \in \mathbb{N}$ is a prime number. This is due to the preliminary fact that the heights of the prime ideals $\mathfrak{p}$ and $\mathfrak{P}$ are equal.

Consider the localizations of the rings
\begin{equation*}
B\triangleq(\mathcal{O}_{K}\setminus \mathfrak{P})^{-1}\mathcal{O}_{K};
\end{equation*}
\begin{equation*}
A\triangleq(\mathbb{Z}[t_{1},t_{2},\cdots,t_{n}]\setminus \mathfrak{p})^{-1}\mathbb{Z}[t_{1},t_{2},\cdots,t_{n}]\cong\mathbb{F}_{p}[t_{1},t_{2},\cdots,t_{n}].
\end{equation*}
It is clear that the ring $B$ is integral over $A$.

For the generator $\mu$, there are two cases: $u\in \overline{\mathbb{Q}}$ or $u\not\in \overline{\mathbb{Q}}$.

\emph{Step 5.} Let $\mu$ be irreducible  in $\mathcal{O}_{K}$. Prove that there exists the case that $u\in \overline{\mathbb{Q}}.$
Hypothesize $$\mu\in \overline{\mathbb{Q}(t_{1},t_{2},\cdots,t_{n})}\setminus \overline{%
\mathbb{Q}}.$$

Put
$$
\mathfrak{P}_{0}\triangleq(\mathcal{O}_{K}\setminus \mathfrak{P})^{-1}(\mu)\subseteq (\mathcal{O}_{K}\setminus \mathfrak{P})^{-1}\mathfrak{P}.
$$
Then $\mathfrak{P}_{0}$ is a prime ideal
of the ring $B$ since $(\mu)$ is a prime ideal of $\mathcal{O}_{K}$.

As $B$ is integral over $A$, from \emph{Going-Down Theorem} we have a polynomial
$$q(t_{1},t_{2},\cdots,t_{n})\in \mathbb{Z}[t_{1},t_{2},\cdots,t_{n}]$$
such that $$\mathfrak{p}_{0}=\mathfrak{P}_{0}\bigcap A$$
where $$\mathfrak{p}_{0}\triangleq(\mathbb{Z}(t_{1},t_{2},\cdots,t_{n})\setminus \mathfrak{p})^{-1}(q(t_{1},t_{2},\cdots,t_{n}))$$
is a prime ideal of $A$ and $q(X_{1},X_{2},\cdots,X_{n})$ is a polynomial with coefficients in $\mathbb{Z}$ such that $q(X_{1},X_{2},\cdots,X_{n})$ is irreducible both in $A$ and in $\mathbb{Z}[X_{1},X_{2},\cdots,X_{n}]$.

Now prove $$\mu=q(t_{1},t_{2},\cdots,t_{n})$$ or $$\mu=-q(t_{1},t_{2},\cdots,t_{n}).$$

In fact, according to \emph{Step 1}, $Spec(B)/Spec(A)$ is \'{e}tale by the
morphism induced from the inclusion map; it follows that $\mathfrak{p}_{0}$ generates $\mathfrak{P}_{0}$ in $B$. Hence we have an element of the form $$\frac{a(t_{1},t_{2},\cdots,t_{n},\mu)}{b(t_{1},t_{2},\cdots,t_{n})}\in B$$ such that
\begin{equation*}
\mu
=\frac{a(t_{1},t_{2},\cdots,t_{n},\mu)}{b(t_{1},t_{2},\cdots,t_{n})}\cdot q(t_{1},t_{2},\cdots,t_{n})
\end{equation*}
holds, where  $a(X_{1},X_{2},\cdots,X_{n},Y)$ and $b(X_{1},X_{2},\cdots,X_{n})$ are polynomials over $\mathbb{Z}$ in several indeterminates.

Put
\begin{equation*}
a(X_{1},X_{2},\cdots,X_{n},Y)=\sum _{0\leq i_{1},i_{2},\cdots, i_{n}\leq l, 0\leq j\leq m}a_{i_{1}i_{2}\cdots i_{n}j}X_{1}^{i_{1}}X_{2}^{i_{2}}\cdots X_{n}^{i_{n}}Y^{j}
\end{equation*}
such that each coefficient $a_{i_{1}i_{2}\cdots i_{n}j}$ is contained in $ \mathbb{Z}$.

We have
$$\mu
=\frac{c(t_{1},t_{2},\cdots,t_{n})+d(t_{1},t_{2},\cdots,t_{n})\cdot\mu}{b(t_{1},t_{2},\cdots,t_{n})}\cdot q(t_{1},t_{2},\cdots,t_{n})$$
where
$$c(X_{1},X_{2},\cdots,X_{n})=\sum _{0\leq i_{1},i_{2},\cdots, i_{n}\leq l,  j=0}a_{i_{1}i_{2}\cdots i_{n}j}X_{1}^{i_{1}}X_{2}^{i_{2}}\cdots X_{n}^{i_{n}};$$
$$d(X_{1},X_{2},\cdots,X_{n})=\sum _{0\leq i_{1},i_{2},\cdots, i_{n}\leq l,  j=1}a_{i_{1}i_{2}\cdots i_{n}j}X_{1}^{i_{1}}X_{2}^{i_{2}}\cdots X_{n}^{i_{n}}.$$

It is seen that
$$c(X_{1},X_{2},\cdots,X_{n})=0$$
holds since in $\mathcal{O}_{K}$ we have
\begin{equation*}
\begin{array}{l}
b(t_{1},t_{2},\cdots,t_{n})\cdot\mu\\

=c(t_{1},t_{2},\cdots,t_{n})q(t_{1},t_{2},\cdots,t_{n})+d(t_{1},t_{2},\cdots,t_{n})
q(t_{1},t_{2},\cdots,t_{n})\cdot\mu.
\end{array}
\end{equation*}

Then
$$\frac{d(t_{1},t_{2},\cdots,t_{n})\cdot q(t_{1},t_{2},\cdots,t_{n})}{b(t_{1},t_{2},\cdots,t_{n})}=1.$$

Immediately, we have $$b(t_{1},t_{2},\cdots,t_{n})\mid d(t_{1},t_{2},\cdots,t_{n})$$ as elements in $\mathbb{Z}[t_{1},t_{2},\cdots,t_{n}]$. Set $$f(t_{1},t_{2},\cdots,t_{n})\triangleq\frac{d(t_{1},t_{2},\cdots,t_{n})}{b(t_{1},t_{2},\cdots,t_{n})}.$$

As $\mu=f(t_{1},t_{2},\cdots,t_{n})\cdot q(t_{1},t_{2},\cdots,t_{n})\cdot\mu,$ we must have
$$f(t_{1},t_{2},\cdots,t_{n})\cdot q(t_{1},t_{2},\cdots,t_{n})=1,$$
which will be in contradiction.

Hence, $$\mu\in \overline{\mathbb{Q}}.$$

\emph{Step 6.} Let $\mu\in \overline{\mathbb{Q}}$  be irreducible  in $\mathcal{O}_{K}$. We have
\begin{equation*}
\mathfrak{p}_{0}=\mathfrak{P}_{0}\bigcap \mathbb{Z}
\end{equation*}
where $\mathfrak{p}_{0}=(p)$ and $\mathfrak{P}_{0}=(\mu)$ are prime ideals of $\mathbb{Z}$ and $\mathbb{Z}[\mu]$, respectively.

It is seen that $\mathfrak{p}_{0}$ generates $\mathfrak{P}_{0}$ in the localization $(\mathbb{Z}[\mu]\setminus \mathfrak{P}_{0})^{-1}\mathbb{Z}[\mu]$ of the ring $\mathbb{Z}[\mu]$ according to preliminary operations on ideals since $\mathfrak{p}$ generates $\mathfrak{P}$ in the ring $B$ and the residue
field $B/\mathfrak{P}$ is a finite separable extension of $A/\mathfrak{p}$
from the assumption.

Let $p$ run through all prime numbers in $\mathbb{Z}$. By definition for unramified it is seen the subfield $K_{0}=\mathbb{Q}[\mu]$
is unramified over $\mathbb{Q}$, or
equivalently, $K_{0}=\mathbb{Q}[\mu]$ is arithmetically  unramified over $\mathbb{Q}$.

It follows that we must have
$
\mu=p
$
since $\mathbb{Q}$ has a trivial maximal unramified extension (see \cite{neu}). Hence, $$ \mu\in \mathbb{Z}[t_{1},t_{2},\cdots,t_{n}].$$

Now from \emph{Steps 3,6}, it is immediate that
\begin{equation*}
K=\mathbb{Q}(t_{1},t_{2},\cdots,t_{n})
\end{equation*}
holds; then
\begin{equation*}
\mathbb{Q}(t_{1},t_{2},\cdots,t_{n})=\mathbb{Q}(t_{1},t_{2},\cdots,t_{n})^{au}.
\end{equation*}
This completes the proof.
\end{proof}

\section{An application to the \'{e}tale fundamental group}

It will be seen that the maximal arithmetically unramified extension can serve as a computation of the \'{e}tale fundamental group of an arithmetic scheme (see \cite{an2,an4} for a general case).
In this section there will be an application of  \emph{Theorem 4.1} to the the \'{e}tale fundamental group of the affine scheme of the ring of integers of a purely transcendental extension over $\mathbb{Q}$.

\subsection{Statement of the theorem}

For a connected scheme $Z$, let $\pi
_{1}^{et}\left( Z,s\right) $ denote the \'{e}tale fundamental group of $Z$ over a
geometric point $s$ of $Z$. See \cite{f-k,sga1,mln,sz} for the definition and properties of \'{e}tale fundamental groups.

\begin{theorem}
$Spec(\mathbb{Z}[t_{1},t_{2},\cdots,t_{n}])$ has a trivial \'{e}tale fundamental group, i.e., there is an isomorphism
$$
\pi _{1}^{et}\left( Spec(\mathbb{Z}[t_{1},t_{2},\cdots,t_{n}]),s\right) \cong \{0\}
$$
of groups, where $s$ is a geometric point of $Spec(\mathbb{Z}[t_{1},t_{2},\cdots,t_{n}])$.
\end{theorem}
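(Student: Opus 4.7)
The plan is to translate the question about the étale fundamental group into a question about finite arithmetically unramified extensions of $\mathbb{Q}(t_{1},\ldots,t_{n})$, and then invoke \emph{Theorem 4.1}. Since $\pi_{1}^{et}(X,s)=\{0\}$ for a connected scheme $X$ if and only if every finite étale cover $Y\to X$ is a disjoint union of copies of $X$, it suffices to show that any connected finite étale cover of $\mathrm{Spec}(\mathbb{Z}[t_{1},\ldots,t_{n}])$ is isomorphic to $\mathrm{Spec}(\mathbb{Z}[t_{1},\ldots,t_{n}])$ itself. Throughout I will use the key fact that $\mathbb{Z}[t_{1},\ldots,t_{n}]$ is an integrally closed domain (in fact a UFD) whose fraction field is $\mathbb{Q}(t_{1},\ldots,t_{n})$.

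First I would take a connected finite étale cover $Y=\mathrm{Spec}(S)\to\mathrm{Spec}(\mathbb{Z}[t_{1},\ldots,t_{n}])$. Since $Y$ is connected and étale over a normal noetherian scheme, $S$ is an integral domain, integrally closed in its fraction field $K=\mathrm{Fr}(S)$, and finite over $\mathbb{Z}[t_{1},\ldots,t_{n}]$. Hence $K/\mathbb{Q}(t_{1},\ldots,t_{n})$ is a finite separable extension, and $S$ coincides with the integral closure of $\mathbb{Z}[t_{1},\ldots,t_{n}]$ in $K$; that is, $S=\mathcal{O}_{K}$. In particular $K$ is an arithmetic function field over the variables $t_{1},\ldots,t_{n}$ in the sense of \S2, and by hypothesis the induced morphism $\mathrm{Spec}(\mathcal{O}_{K})\to\mathrm{Spec}(\mathcal{O}_{\mathbb{Q}(t_{1},\ldots,t_{n})})$ is unramified, so $K$ is arithmetically unramified over $\mathbb{Q}(t_{1},\ldots,t_{n})$.

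Next, invoking \emph{Theorem 4.1}, the maximal arithmetically unramified extension $\mathbb{Q}(t_{1},\ldots,t_{n})^{au}$ equals $\mathbb{Q}(t_{1},\ldots,t_{n})$ itself. Therefore $K=\mathbb{Q}(t_{1},\ldots,t_{n})$, and consequently $S=\mathcal{O}_{K}=\mathbb{Z}[t_{1},\ldots,t_{n}]$. This shows every connected finite étale cover of $\mathrm{Spec}(\mathbb{Z}[t_{1},\ldots,t_{n}])$ is trivial, and hence $\pi_{1}^{et}(\mathrm{Spec}(\mathbb{Z}[t_{1},\ldots,t_{n}]),s)\cong\{0\}$.

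The step I expect to require the most care is the identification $S=\mathcal{O}_{K}$, i.e.\ verifying that a connected finite étale $\mathbb{Z}[t_{1},\ldots,t_{n}]$-algebra $S$ is automatically an integrally closed domain equal to the integral closure of $\mathbb{Z}[t_{1},\ldots,t_{n}]$ in $\mathrm{Fr}(S)$. The key inputs are that étale morphisms preserve normality over a normal base, and that a finite étale cover of a connected normal scheme is a disjoint union of integral normal schemes; these are standard facts from \cite{sga1,mln}. Once this point is established, the application of \emph{Theorem 4.1} is immediate and the proof closes quickly.
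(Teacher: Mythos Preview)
Your proposal is correct and follows essentially the same route as the paper: both arguments reduce the triviality of $\pi_{1}^{et}$ to \emph{Theorem 4.1} by identifying connected finite \'{e}tale covers of $\mathrm{Spec}(\mathbb{Z}[t_{1},\ldots,t_{n}])$ with finite arithmetically unramified extensions of $\mathbb{Q}(t_{1},\ldots,t_{n})$. The only difference is packaging: the paper channels this identification through the general \emph{Propositions 5.2--5.3} (expressing $\pi_{1}^{et}$ as the Galois group of $k(X;\Omega)^{un}$ and then showing $k(X;\Omega)^{un}=k(X;\Omega)^{au}$), whereas you unwind the same content directly for this particular scheme by showing any connected finite \'{e}tale $S$ equals $\mathcal{O}_{K}$ for an arithmetically unramified $K$. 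Your identification $S=\mathcal{O}_{K}$ is exactly the point where the paper's two propositions do their work, and your justification via normality of \'{e}tale covers over a normal base is the standard one.
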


We will give the proof of \emph{Theorem 5.1} in \S 5.4.

\subsection{Recalling the \'{e}tale fundamental group of a normal scheme}

Let $X$ be a connected normal scheme surjectively over $\mathbb{Z}$ of finite type.

Let $\Omega=\overline{k(X)}$ be an algebraic closure and $s$  a geometric point of $X$ over $\Omega$.
As in \cite{an2,an4}, we put
\begin{itemize}
\item $\left[X; \Omega \right]_{et}\triangleq$ the set of the pointed covers
$(Y,s_{Y})$ of $(X,s)$, where $Y$ is an irreducible finite \'{e}tale
Galois cover of $X$, $s_{Y} $ is a geometric point of $Y$ over $s_{\xi}$,
and two such covers $(Y,s_{Y}),(Z,s_{Z}) $ of $(X,s)$ are identified
with each other in $\left[X; \Omega \right]_{et}$ if there is an isomorphism
$f$ of $Y$ onto $Z$ over $X$ with $f(s_{Y})=s_{Z}$.
\end{itemize}

See \cite{f-k,sga1,sz} for definition of finite \'{e}tale Galois covers. For brevity, we write $Y=(Y,s_{Y})$ for a $%
(Y,s_{Y})\in \left[ X;\Omega \right] _{et}$.

For any $X_{\alpha},X_{\beta}\in \left[X; \Omega \right]_{et}$, we say
\begin{equation*}
X_{\alpha}\leq X_{\beta}
\end{equation*}
if and only if $X_{\beta}$ is a finite \'{e}tale Galois cover over $%
X_{\alpha}.$ Then $\left[X; \Omega \right]_{et} $ is a directed set with the
partial order $\leq$.

Set

\begin{itemize}
\item $k(X; \Omega)^{un}\triangleq {{\lim}_{\rightarrow_{Z\in {\left[X;
\Omega \right]_{et}}}}}{\lambda_{Z}(k(Z))} $, i.e., $k(X; \Omega)^{un}$ is
the direct limit of the direct system of function fields $k(Z)$ indexed by $%
\left[X; \Omega \right]_{et}$, where each $\lambda_{Z}:k(Z)\to \Omega$ is
the $k(X)$-embedding of fields.
\end{itemize}

$k(X; \Omega)^{un}$ is said to be the \textbf{maximal formally unramified extension} of the arithmetic variety $X$ in $\Omega$ (see \cite{an2,an4}).

It is easily seen that $k(X; \Omega)^{un}$ is an algebraic Galois extension of $k(X)$.

Here there is a computation of the \'{e}tale fundamental group of a connected normal
scheme.

\begin{proposition}
The \'{e}tale fundamental group of $X$ is canonically isomorphic to the Galois group of the maximally formally unramified extension of the function field $k(X)$. That is, there is an isomorphism
\begin{equation*}
\pi _{1}^{et}\left( X,s\right) \cong Gal\left( {k(X; \Omega})^{un}/k\left(
X\right) \right)
\end{equation*}
of groups in a natural manner.
\end{proposition}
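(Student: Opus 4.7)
The plan is to identify both sides as inverse limits indexed by the directed set $[X;\Omega]_{et}$ and then match them level by level. First, by the general theory of the étale fundamental group, the pointed connected finite étale Galois covers are cofinal in the category of pointed finite étale covers of $(X,s)$, so
$$\pi_{1}^{et}(X,s) \;\cong\; \varprojlim_{Y\in [X;\Omega]_{et}} \mathrm{Aut}(Y/X),$$
with transition morphisms given by restriction along covers $Y'\to Y$. On the Galois-theoretic side, since $k(X;\Omega)^{un}$ is by definition the direct limit of the function fields $k(Y)$ with $Y\in [X;\Omega]_{et}$, taking Galois groups produces
$$\mathrm{Gal}(k(X;\Omega)^{un}/k(X)) \;\cong\; \varprojlim_{Y\in [X;\Omega]_{et}} \mathrm{Gal}(k(Y)/k(X)).$$
It therefore suffices to produce, canonically in $Y$, an isomorphism $\mathrm{Aut}(Y/X)\cong \mathrm{Gal}(k(Y)/k(X))$ compatible with the restriction maps on both sides.

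For a fixed $Y\in [X;\Omega]_{et}$, I would exploit the hypothesis that $X$ is connected and normal of finite type over $\mathbb{Z}$: since étale morphisms preserve normality, the irreducible finite étale cover $Y$ is again normal and integral, and the inclusion $k(X)\hookrightarrow k(Y)$ is a finite separable extension that realizes $Y$ as the normalization of $X$ in $k(Y)$. Each $X$-automorphism $\sigma$ of $Y$ restricts to a $k(X)$-automorphism of $k(Y)$, and conversely every $k(X)$-automorphism extends uniquely through the universal property of normalization to an $X$-automorphism of $Y$. The Galois hypothesis $|\mathrm{Aut}(Y/X)|=\deg(Y/X)=[k(Y):k(X)]$ then forces $k(Y)/k(X)$ to be Galois and makes the restriction map an isomorphism. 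The fixed geometric point $s_{Y}$ above $s$ pins down an embedding $\lambda_{Y}:k(Y)\hookrightarrow\Omega$, which shows that the isomorphism is functorial along the directed system and hence passes to the inverse limit.

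Combining these two steps yields the asserted isomorphism of profinite groups, and its naturality in $s$ is transparent because the geometric point is carried along from the start of the construction. The main obstacle is verifying that the directed set $[X;\Omega]_{et}$ really computes both sides, which amounts to two cofinality checks. First, every pointed finite étale cover of $(X,s)$ must be dominated by an element of $[X;\Omega]_{et}$; this is standard via Galois closure. Second, every finite Galois subextension $L$ of $k(X;\Omega)^{un}/k(X)$ must be the function field of some $Y\in [X;\Omega]_{et}$; this is the delicate part, which one handles by normalizing $X$ in $L$ and observing that the resulting finite morphism is étale because $L$ is, by construction of $k(X;\Omega)^{un}$, contained in $k(Y_{0})$ for some étale Galois $Y_{0}\to X$, so the normalization of $X$ in $L$ sits as an intermediate scheme between $X$ and $Y_{0}$ and inherits étaleness over $X$.
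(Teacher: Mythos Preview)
Your argument is correct and is exactly the standard one found in the references the paper cites; the paper's own proof consists solely of the line ``See \cite{an2,an4,f-k,sga1,mln,sz}'', so you have supplied precisely the content the paper chose to omit.

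One minor remark: the second cofinality check you label ``the delicate part'' is not actually needed. Since $k(X;\Omega)^{un}$ is \emph{defined} as the directed union of the fields $k(Y)$ with $Y\in[X;\Omega]_{et}$, any finite (Galois) subextension of $k(X;\Omega)^{un}/k(X)$ is automatically contained in some $k(Y)$, and that alone gives the inverse-limit description $\mathrm{Gal}(k(X;\Omega)^{un}/k(X))\cong\varprojlim_{Y}\mathrm{Gal}(k(Y)/k(X))$. You do not have to show that every such $L$ is literally equal to some $k(Y)$ (though your normalization argument does establish this, and it is true).
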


\begin{proof}
See \cite{an2,an4,f-k,sga1,mln,sz}.
\end{proof}

\subsection{A computation of the maximally formally unramified extension}

Let $X$ be a connected normal scheme surjectively over $\mathbb{Z}$ of finite type. The function field $k(X)$ of $X$ is an arithmetic function field defined as in \S 2.

For the function field $k(X)$, put $$k(X;\Omega)^{au}=(k(X);\Omega)^{au}.$$ Here there is a computation of the maximal formally
ramified extension.

\begin{proposition}
The maximal arithmetically
unramfied extension $k(X)^{au}$ is equal to the maximal formally ramified
extension $k(X)^{un}$.  That is, we have
\begin{equation*}
k(X;\Omega)^{au}= k(X;\Omega)^{un}
\end{equation*}
as subfields of a given algebraic closure $\Omega$ of the function field $k(X)$.
\end{proposition}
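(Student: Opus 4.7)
The plan is to prove the two inclusions separately, reducing in each case to the finite-Galois level by \emph{Proposition 3.7} and by the directed-limit definition of $k(X;\Omega)^{un}$. That is, it suffices to show that a finite Galois subextension $L/k(X)$ of $\Omega$ is arithmetically unramified if and only if $L=k(Y)$ for some irreducible finite \'{e}tale Galois cover $Y\to X$.

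For the inclusion $k(X;\Omega)^{au}\subseteq k(X;\Omega)^{un}$, I would start from a finite arithmetically unramified Galois extension $L/k(X)$, so that by definition $Spec(\mathcal{O}_L)\to Spec(\mathcal{O}_{k(X)})$ is \'{e}tale. Let $Y$ denote the normalization of $X$ in $L$; then $Y$ is finite and surjective onto $X$ with $k(Y)=L$, and $Y$ coincides with $Spec(\mathcal{O}_L)$ over the birational image of $X$ inside $Spec(\mathcal{O}_{k(X)})$. Transferring the \'{e}taleness across this birational agreement yields unramifiedness of $Y\to X$ at every codimension-one point of $X$, and Zariski--Nagata purity of the branch locus upgrades this to \'{e}taleness on all of $X$, since both $X$ and $Y$ are normal.

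For the reverse inclusion $k(X;\Omega)^{un}\subseteq k(X;\Omega)^{au}$, I would start from an irreducible finite \'{e}tale Galois cover $Y\to X$ and set $L=k(Y)$. The scheme $Spec(\mathcal{O}_L)$ is the normalization of $Spec(\mathcal{O}_{k(X)})$ in $L$ and agrees birationally with $Y$, so the \'{e}taleness of $Y\to X$ pulls back to unramifiedness of $Spec(\mathcal{O}_L)\to Spec(\mathcal{O}_{k(X)})$ at every height-one prime of $\mathcal{O}_{k(X)}$. Combined with the separability of the generic fibre and normality of $\mathcal{O}_{k(X)}$, another application of purity (this time to the integrally closed base) suffices to conclude unramifiedness at every prime, so that $L/k(X)$ is arithmetically unramified.

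The main obstacle is that $X$ and $Spec(\mathcal{O}_{k(X)})$ are in general only birational, not isomorphic, normal models of $k(X)$, so the identification between finite \'{e}tale covers of $X$ and finite arithmetically unramified extensions has to be mediated by the normalization construction together with purity of the branch locus. The key technical input on which the argument leans is therefore Zariski--Nagata purity, which ensures that any discrepancy between the two models sits in codimension $\geq 2$ and does not affect whether the normalization in $L$ is \'{e}tale; the remaining bookkeeping concerns matching the affine data of $\mathcal{O}_{k(X)}$ with the global data of $X$ along their common codimension-one points.
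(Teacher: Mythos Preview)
Your route is genuinely different from the paper's, and it also carries a real gap.

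\textbf{How the paper argues.} The paper does not compare a general $X$ with $Spec(\mathcal{O}_{k(X)})$ via purity. Instead, writing $L=k(X)$, it first asserts (deferring the justification to \cite{an2,an4}) that $k(X;\Omega)^{un}=k(Spec(\mathcal{O}_L);\Omega)^{un}$, thereby replacing $X$ by $Spec(\mathcal{O}_L)$ from the outset. Once the base is $Spec(\mathcal{O}_L)$, the two directed systems are matched directly: any irreducible finite \'{e}tale Galois cover $Y\to Spec(\mathcal{O}_L)$ is itself normal, hence equal to $Spec(\mathcal{O}_{M})$ with $M=k(Y)$, and \'{e}tale implies unramified, so $M\in[L](\Omega)^{au}_{0}$; the paper then records that $[L](\Omega)^{au}_{0}$ is cofinal in the set $\Sigma$ of such function fields and concludes by equality of direct limits. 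No appeal to Zariski--Nagata is made anywhere.

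\textbf{Where your argument falters.} First, a small but consequential slip: by Definition~2.2, ``arithmetically unramified'' means $Spec(\mathcal{O}_L)\to Spec(\mathcal{O}_{k(X)})$ is \emph{unramified}, not \'{e}tale; your opening line ``so that by definition \ldots\ is \'{e}tale'' already assumes flatness that must be argued separately. Second, and more seriously, Zariski--Nagata purity requires the base scheme to be \emph{regular}, while the proposition only assumes $X$ (and hence $Spec(\mathcal{O}_{k(X)})$) is normal; for a merely normal base the branch locus need not be pure of codimension one, so your upgrade from ``unramified in codimension one'' to ``\'{e}tale everywhere'' is not available in general. Third, the ``birational agreement'' you invoke does not give a matching of codimension-one points: two normal models of the same function field can have entirely different divisorial valuations, so unramifiedness at all height-one primes of $\mathcal{O}_{k(X)}$ does not automatically transfer to all codimension-one points of $X$ (or vice versa). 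The paper avoids all three issues by passing to $Spec(\mathcal{O}_L)$ immediately, where the normalization-in-$M$ is literally $Spec(\mathcal{O}_M)$ and the comparison of directed systems is tautological.
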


\begin{proof}
(See \cite{an2,an4})
Write $L=k(X)$. Let $\mathcal{O}_{L}$ be the ring  of algebraic integers of the arithmetic function field $L$ over the specified variables. We have
\begin{equation*}
L=k(Spec(\mathcal{O}_{L});
\end{equation*}
\begin{equation*}
(L;\Omega)^{un}=(k(Spec(\mathcal{O}_{L}));\Omega)^{un}.
\end{equation*}

Then it suffices to prove
\begin{equation*}
(k(Spec(\mathcal{O}_{L}));\Omega)^{un}=(L;\Omega)^{au}.
\end{equation*}
In fact, we have
\begin{equation*}
(L;\Omega)^{au}= {{\lim}_{\rightarrow_{Z\in {\left[ L \right]%
(\Omega)^{au}_{0}}}}}{\ \lambda_{Z}(Z)};
\end{equation*}
\begin{equation*}
(k(Spec(\mathcal{O}_{L}));\Omega)^{un}= {{\lim}_{\rightarrow_{X\in {\left[%
Spec(\mathcal{O}_{L}); \Omega \right]_{et}}}}}{\lambda_{X}(k(X))}.
\end{equation*}

On the other hand, put
\begin{equation*}
\Sigma=\{k(X)\mid X\in {\left[Spec(\mathcal{O}_{L});
\Omega \right]_{et}}\}.
\end{equation*}
Then $\Sigma$ is a directed set, where the partial order is given by set inclusion.

As $Spec(\mathcal{O}_{L})$ is a normal scheme, it is easily seen that each $X\in {%
\left[Spec(\mathcal{O}_{L}); \Omega \right]_{et}}$ is normal from
preliminary facts on \'{e}tale covers (see \cite{sga1,mln}). Then $\left[ L %
\right](\Omega)^{au}_{0}$ is a cofinal directed subset of $\Sigma$.
Hence,
\begin{equation*}
\begin{array}{l}
(k(Spec(\mathcal{O}_{L}));\Omega)^{un} \\
={{\lim}_{\rightarrow_{X\in {\left[Spec(\mathcal{O}_{L}); \Omega \right]_{et}%
}}}}{\lambda_{X}(k(X))} \\
= {{\lim}_{\rightarrow_{Z\in {\Psi[Spec(\mathcal{O}_{L})]}}}}{\lambda_{Z}(Z)}
\\
={{\lim}_{\rightarrow_{Z\in {\left[ L \right](\Omega)^{au}_{0}}}}}{\
\lambda_{Z}(Z)} \\
=(L;\Omega)^{au}.%
\end{array}%
\end{equation*}
This completes the proof.
\end{proof}

\subsection{Proof of the main theorem}

Now we can prove \emph{Theorem 5.1}.

\begin{proof}
(\textbf{Proof of \emph{Theorem 5.1}}) Let $\Omega$ be an algebraic closure  of the arithmetic function field $k( Spec(\mathbb{Z}[t_{1},t_{2},\cdots,t_{n}])=\mathbb{Q}(t_{1},t_{2},\cdots,t_{n})$. Take a geometric point $s$ of $Spec(\mathbb{Z}[t_{1},t_{2},\cdots,t_{n}])$ over $\Omega$.
Then there are natural isomorphisms
\begin{equation*}
\begin{array}{l}
\pi _{1}^{et}\left( Spec(\mathbb{Z}[t_{1},t_{2},\cdots,t_{n}]),s\right)\\

\cong Gal\left( {k( Spec(\mathbb{Z}[t_{1},t_{2},\cdots,t_{n}]); \Omega})^{un}/k\left(
X\right) \right)\\

\cong Gal\left( {k( Spec(\mathbb{Z}[t_{1},t_{2},\cdots,t_{n}]); \Omega})^{au}/k\left(
X\right) \right)\\

\cong Gal\left( {\mathbb{Q}(t_{1},t_{2},\cdots,t_{n})}^{un}/{\mathbb{Q}(t_{1},t_{2},\cdots,t_{n})} \right)\\

\cong Gal\left( {\mathbb{Q}(t_{1},t_{2},\cdots,t_{n})}/{\mathbb{Q}(t_{1},t_{2},\cdots,t_{n})} \right)\\

\cong \{0\}
\end{array}
\end{equation*}
between groups
from \emph{Propositions 5.2-3, Theorem 4.1}.
\end{proof}

\newpage

\end{document}